\newenvironment{@abssec}[1]{%
     \if@twocolumn
         \section*{#1}%
         \else
        \vspace{.05in}\footnotesize
       \parindent .2in
       {\bfseries #1. }\ignorespaces
     \fi}
     {\if@twocolumn\else\par\vspace{.1in}\fi}
\newenvironment{keywords}{\begin{@abssec}{Key words}}{\end{@abssec}}
\newenvironment{AMS}{\begin{@abssec}{AMS subject
classification}}{\end{@abssec}}
\newtheorem{theorem}{Theorem}[section]
\newtheorem{proposition}[theorem]{Proposition}
\newtheorem{lemma}[theorem]{Lemma}
\newtheorem{definition}[theorem]{Definition}
\newtheorem{example}[theorem]{Example}
\newtheorem{rmk}[theorem]{Remark}
\newtheorem{corollary}[theorem]{Corollary}
\numberwithin{equation}{section}
\newcommand{\C}{\mathbb{C}}
\newcommand{\N}{\mathbb{N}}
\newcommand{\R}{\mathbb{R}}
\newcommand{\Z}{\bf{Z}}
\newcommand{\strangek}{\mathbb{\kappa}}
\newcommand{\sk}{\strangek}
\renewcommand{\phi}{\varphi}
\renewcommand{\emptyset}{\varnothing}
\def\Ddots{\mathinner{\mkern1mu\raise\p@
\vbox{\kern7\p@\hbox{.}}\mkern2mu
\raise4\p@\hbox{.}\mkern2mu\raise7\p@\hbox{.}\mkern1mu}}
\newcommand{\Ideal}{\operatorname{Ideal}}
\newcommand{\Span}{\operatorname{span}\;}
\newcommand{\rank}{\operatorname{rank}}
\newcommand{\ex}{\operatorname{ex}}
\newcommand{\co}{\operatorname{corank}}
\newcommand{\bprime}{\operatorname{\mathbb{B}_{\strangek}}}
\newcommand{\calJ}{\operatorname{\mathcal{J}}}
\newcommand{\calP}{\operatorname{\mathcal{P}}}
\newcommand{\calD}{\operatorname{\mathcal{D}}}
\newcommand{\calQ}{\operatorname{\mathcal{Q}}}
\newcommand{\calH}{\operatorname{\mathcal{H}}}
\newcommand{\bks}{{\backslash}}
\newcommand{\lam}{\operatorname{\lambda}}
\newcommand{\Tet}{\operatorname{\Theta}}
\newcommand{\tet}{\operatorname{\theta}}
\newcommand{\Exp}{\operatorname{\Exp}}
\def\inpro#1{\langle #1\rangle}
\def\esp{\epsilon}
\def\B{{\mathbb{B}}}
\def\I{{\mathbb{I}}}
\def\Span{{\rm span}\,}
\def\Exp{{\rm Exp}}
\def\tet{\theta}
\def\least{{}{_\downarrow}}
\def\all{\forall}
\def\Tet{\Theta}
\def\openR{{{\rm I}\kern-.16em {\rm R}}}
\def\b{{\cal V}}
\let\R\openR
\def\eop{{\begin{proof}[]\end{proof}}}
\begin{document}

\title{External zonotopal algebra}
\author{ Nan Li\thanks{Partially supported by the US National Science
Foundation under Grant DMS-0604423.}\\ Department of Mathematics \\
Massachusetts Institute of Technology\\
\and Amos Ron\thanks{Supported by the US National Science Foundation
under Grants DMS-0602837 and DMS-0914986 and by the National
Institute of General Medical Sciences under Grant
NIH-1-R01-GM072000-01.} \\
Department of Mathematics \& \\
Department of Computer Science \\
University of Wisconsin-Madison }
\date{April 10, 2011}
\maketitle

\begin{keywords}
Multivariate polynomials, polynomial ideals, duality, grading, kernels of
differential operators, polynomial interpolation, box splines, zonotopes,
hyperplane arrangements, matroids, graphs, Hilbert series.
\end{keywords}

\begin{AMS}
13F20, 13A02, 16W50, 16W60, 47F05, 47L20, 05B20, 05B35, 05B45, 05C50,
52B05, 52B12, 52B20, 52C07, 52C35, 41A15, 41A63.
\end{AMS}

\begin{abstract}
We provide a general, unified, framework for {\it external zonotopal
algebra}. The approach is critically based on employing simultaneously
the two dual algebraic constructs and invokes the underlying matroidal
and geometric structures in an essential way. This general theory makes
zonotopal algebra an applicable tool for a larger class of polytopes.
\end{abstract}

\section{Introduction}

{\bf General.} The most common methodology for constructing
multivariate splines is via their definition as volume functions. In
this approach, one begins with a linear map, usually a surjection
$$X:\R^N\to\R^n,$$
and continues by restricting this map to a special polyhedron
${\Z}\subset\R^N$. Most relevant to this paper is the theory of {\bf
box splines}, in which $\Z$ is chosen as the unit cube $[0,1]^N$.
Two geometries underscore box spline theory: that of {\it
zonotopes}, and the dual geometry of {\it hyperplane arrangements}.
The theory continues with the association of the two geometries with
corresponding dual algebraic structures, and culminates with a
seamless cohesion of the geometry, the algebra, the spline
function and pertinent combinatorial properties of the map $X$, where
the latter viewed as a linear matroid.

Attempts to extend the aforementioned constructions beyond the
original setup of box spline theory began in the mid 90's and
reached their successful completion in \cite{HR}: that paper
introduced  a three-layer theory that was coined there {\it
zonotopal algebra}, with the original box spline theory occupying
the middle {\it central} layer. Two other algebraic constructions,
over the same pair of dual geometries and related to the same
matroid $X$, were newly introduced in \cite{HR}: an external theory
and an internal theory. Further developments of zonotopal algebra
were recorded in \cite{AP}, \cite{HRX} and \cite{L}. We review below
some of the pertinent constructions and results in  those papers.

Our paper is devoted to the external theory within zonotopal
algebra, and solely focuses on the homogeneous, continuous setup (as
\cite{AP,HRX,L} do).  {\it Our goal is to provide a unifying theory
that encompasses all the above-listed approaches and constructions.}
We fix, as above, a linear $X:\R^N\to\R^n$, represent $X$ as an
$n\times N$ matrix (say, with respect to the standard bases in
$\R^N$ and $\R^n$), and treat $X$ also as the multiset of its
columns. Zonotopal algebra, in each of its three layers, continues
with the introduction of a pair of homogeneous polynomial spaces;
the first is usually dubbed a ``$\mathcal{P}$-space", is connected
to the geometry of the zonotope and is explicit. The second is known
as a ``$\mathcal{D}$-space" and, as a rule, is defined implicitly as
the joint kernel of a suitable set of differential operators, whose
corresponding ideal of differential operators is labeled  a
``$\calJ$-ideal''. The ideal $\calJ$ and its corresponding kernel
$\mathcal{D}$ are associated with the geometry of the hyperplane
arrangement.

\bigskip\noindent
{\bf Zonotopal algebra, central.} Let us describe in further detail
the setup.  With the multiset
$X\subset\R^n$ given and fixed, we associate every $x\in X$ (i.e.,
every column of the matrix $X$), with the linear form
$$p_x:\R^n\to \R \ : \  t\mapsto x\cdot t,$$
(with ``$\cdot$" the standard inner product in $\R^n$)
and the corresponding differential operator
$$p_x(D)$$
i.e., the directional derivative $D_x$ in the $x$-direction. Given a
(multi)subset $Z\subset X$, we further denote
$$p_Z:=\prod_{x\in Z}p_x.$$
The central zonotopal algebra setup assumes then that $X$ is of full
rank $n$, and continues with a partition of $2^X$ into the
collection of long subsets
$$L(X):=\{Z\subset X\mid \rank(X\bks Z)<n\},$$
and its complementary collection of short subsets:
$$S(X):=2^X\bks L(X).$$
The central $\mathcal{P}$-space $\mathcal{P}(X)$ is defined with the
aid of the short sets in $S(X)$:
$$\mathcal{P}(X):=\Span \{p_Z: \ Z\in S(X)\}.$$
The long sets generate the $\calJ$-ideal:
$$\calJ(X):=\Ideal\{p_Z
\mid Z\in L(X)\}=\Ideal\{p_Z\mid Z\cap B\neq \emptyset,\, \forall B\in
\B(X)\},$$
with
\begin{equation}\label{defbasis}
\mathbb{B}(X)
\end{equation}
the set of {\bf bases} of $X$, i.e., subsets of
$X$ that form a basis for $\R^n$.
The $\mathcal{D}$-space is then the kernel of $\mathcal{J}(X)$:
$$\mathcal{D}(X):=
\{f\in \Pi\mid p(D)f=0,\,\forall p\in \calJ(X)\}=
\{f\in \Pi\mid p(D)f(0)=0,\,\forall p\in \calJ(X)\},$$
with
$$\Pi=\C[t_1,\ldots,t_n]$$ the space of all polynomials  in $n$ variables.
It is known, \cite{DR}, that
$$\mathcal{P}(X)\oplus \calJ(X)=\Pi,$$
which is equivalent to the statement that the pairing
\begin{equation}
\label{defpairing}
\langle\cdot,\cdot\rangle: \Pi\times \Pi \ : \  (p,q)\mapsto \langle p,q\rangle:=
p(D)q(0)
\end{equation}
induces a linear bijection between $\mathcal{P}(X)$ and
$\mathcal{D}(X)'$, i.e., every linear functional
$\lam\in\mathcal{D}(X)'$ is uniquely represented by some
$p\in\mathcal{P}(X)$: $\lam q=\langle p,q\rangle$,
$q\in\mathcal{D}(X)$. Moreover, it is known, \cite{DM}, \cite{DR}, that
$$\dim\mathcal{P}(X)=\dim\mathcal{D}(X)=\#\B(X).$$
In the sequel we will also need the
(multi)set
\begin{equation}\label{defind}
\mathbb{I}(X)
\end{equation}
of all {\bf independent subsets} of $X$ (i.e., all subsets of the bases).

\bigskip\noindent
{\bf Connection with geometry and the least map.} As said, two geometries
underlie zonotopal algebra. We discuss here the connection of
$\mathcal{D}(X)$ with hyperplane arrangements; cf.\ \cite{BDR} and \cite{HR}
for connections of $\mathcal{P}(X)$ and related spaces to zonotopes. One
starts, \cite{DR}, by associating each $x\in X$ with a constant
$\lam_x\in\R$. Set
\begin{equation}\label{defqx}
q_x:=p_x-\lam_x.
\end{equation}
Each $B\in\B(X)$ defines a {\it vertex} $\b(B)\in\R^n$, viz,
the common zero of the polynomials $(q_x)_{x\in B}$. Assume that the
map
$$\b:\B(X)\to\R^n$$
is injective (which is the generic case in terms of the selection of
$(\lam_x)$), i.e., no point $v\in\R^n$ is a common zero for $n+1$
polynomials $q_x$, $x\in X$. The set $\b(\B(X))$ is then the vertex
set of the hyperplane arrangement $\calH(X)$ generated by the zero sets
$H_x$ of $q_x$, $x\in X$.
\begin{example}

In case $$X=\left(
           \begin{array}{cccc}
             0 & 1 & 1 & 1 \\
             1 & 0 & -1 & 1 \\
           \end{array}
         \right)=:(x_1,x_2,x_3,x_4)
$$the hyperplane arrangement $\calH(X)$ is as follows:
\begin{center}
$ \xy0;/r.2pc/: (20,-10)*{}="G"; (-30,-10)*{}="H"; (-15,20)*{}="L";
(-15,-20)*{}="M"; (-15,25)*{H_2}; (25,-10)*{H_1};
 "G"; "H"**\dir{-};
 "L"; "M"**\dir{-};
 (-20,-20)*{}="y3a"; (20,20)*{}="y3b";(25,25)*{H_3};
 "y3a"; "y3b"**\dir{-};
 (20,-20)*{}="y1a"; (-20,20)*{}="y1b";(25,-20)*{H_4};
 "y1a"; "y1b"**\dir{-};
 (0,0)*{\bullet};(-15,15)*{\bullet};(-15,-15)*{\bullet};
 (-10,-10)*{\bullet};(10,-10)*{\bullet};(-15,-10)*{\circ};
 (-17,-7)*{v}
\endxy
.$
\end{center}
Here,  $H_i$ is the zero set of $q_{x_i}$, and the chosen constants
are $(0,0,1,5)$.
There are six vertices in  $\b(\B(X))$. For example, the marked vertex
$v$ is
$\b(\{x_1,x_2\}).$

\end{example}
 We apply then to the vertex set $\b(\B(X))$ the {\it least map}
of \cite{BR90}. The least map associates each finite $\Tet\subset
\R^n$ with a polynomial space $\Pi(\Tet)$ in the following manner.
One defines first an exponential space
$$\Exp(\Tet):=\Span\{e_\tet\mid \tet\in\Tet\},\quad e_\tet: t\mapsto
e^{\tet\cdot t}.$$
Each $f\in\Exp(\Tet)$ is an $n$-variate entire function hence admits an
expansion
$$f=f_0+f_1+\ldots,$$
with $f_j$ a homogeneous polynomial of degree $j$. Define
$$f\least:=f_j, \quad j:=\max\{j'\ge 0\mid\ f_m=0,\ \all m<j'\}.$$
Then:

\begin{theorem} [\cite{BR90}] \label{leastthm}With
$$\Pi(\Tet):=\Span\{f\least\mid\ f\in\Exp(\Tet)\},$$
the restriction map from $\R^n$ to $\Tet$: $p\mapsto p_{|_\Tet}$ is
a bijection between $\Pi(\Tet)$ and $\R^ {\Tet}$. In particular,
$$\dim\Pi(\Tet)=\#\Tet.$$
\eop
\end{theorem}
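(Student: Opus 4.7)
The plan is to prove the two assertions together by producing a degree filtration of $\Exp(\Theta)$ that matches a grading of $\Pi(\Theta)$, and then invoking the Fischer pairing $\langle p,q\rangle:=p(D)q(0)$ (as in (\ref{defpairing}), but allowed to take entire $q$) to secure injectivity of the restriction. The pairing plays a dual role: restriction of $p$ to $\Theta$ is encoded by the values $p(D)e^{\theta}(0)=p(\theta)$, while on homogeneous polynomials of the same degree the pairing specializes to the classical, positive-definite Fischer inner product on the homogeneous-degree slice $\Pi_d$.

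For the dimension count, set $V:=\Exp(\Theta)$ (of dimension $\#\Theta$ since distinct exponentials are linearly independent), and filter
\[
V=V_0\supseteq V_1\supseteq V_2\supseteq\cdots,\qquad V_d:=\{g\in V:g_j=0\text{ for all }j<d\},
\]
where $g=\sum_j g_j$ is the homogeneous Taylor expansion of $g$ at the origin. The map $\phi_d:V_d\to\Pi_d$, $g\mapsto g_d$, is linear with kernel exactly $V_{d+1}$. I claim its image is the $d$-th graded piece $\Pi(\Theta)_d$ of $\Pi(\Theta)$: each spanning vector $f_\downarrow$ of degree $d$ arises as $\phi_d(f)$ for $f\in V_d$, and conversely any $g\in V_d$ satisfies either $g_d=g_\downarrow\in\Pi(\Theta)_d$ or $g_d=0$. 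Summing the resulting isomorphisms $V_d/V_{d+1}\cong\Pi(\Theta)_d$ telescopes to $\dim\Pi(\Theta)=\dim V=\#\Theta$.

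For injectivity, assume $p\in\Pi(\Theta)$ with $p|_\Theta=0$. Since $p(D)g(0)=\sum_\theta c_\theta p(\theta)$ for $g=\sum_\theta c_\theta e^{\theta}\in V$, the hypothesis forces $\langle p,g\rangle=0$ for every $g\in V$. Decompose $p=\sum_d p_d$ homogeneously, and, seeking a contradiction, suppose $p\neq 0$ with $d^\ast:=\max\{d:p_d\neq 0\}$. Using surjectivity of $\phi_{d^\ast}$, pick $g\in V_{d^\ast}$ with $g_{d^\ast}=p_{d^\ast}$. Because $p_d(D)$ lowers degree by exactly $d$, only the degree-$d$ part of $g$ contributes to $p_d(D)g(0)$, so $\langle p,g\rangle=\sum_d\langle p_d,g_d\rangle$ collapses: terms with $d<d^\ast$ vanish because $g\in V_{d^\ast}$, and terms with $d>d^\ast$ vanish by the maximality of $d^\ast$. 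Only $\langle p_{d^\ast},p_{d^\ast}\rangle$ survives, and the positive-definiteness of the Fischer form on $\Pi_{d^\ast}$ forces $p_{d^\ast}=0$, a contradiction. So $p=0$, and being a linear map between spaces of equal dimension, restriction is bijective.

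The most delicate step is the identification $\mathrm{image}(\phi_d)=\Pi(\Theta)_d$: one must exploit that $\Pi(\Theta)$, being spanned by homogeneous elements, is a graded subspace of $\Pi$, and handle the case split between $\deg g_\downarrow=d$ (where $\phi_d(g)=g_\downarrow$ lies in the defining spanning set) and $\deg g_\downarrow>d$ (where $\phi_d(g)=0$). A secondary subtlety, if one insists on the ground field $\C$, is that the Fischer form $\sum c_\alpha^2\alpha!$ is bilinear rather than Hermitian and may vanish on non-real polynomials; this is handled by viewing $\Pi(\Theta)$ as the complexification of its real analogue whenever $\Theta\subset\R^n$, or by replacing $\langle p_{d^\ast},p_{d^\ast}\rangle$ by $\langle p_{d^\ast},q\rangle$ for a suitable $q\in\Pi(\Theta)_{d^\ast}$ witnessing non-degeneracy of the induced bilinear form.
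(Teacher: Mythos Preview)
The paper does not supply its own proof of this theorem: it is quoted from \cite{BR90}, and the trailing end-of-proof marker is an empty proof environment. There is therefore no in-paper argument to compare against.

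Your argument is correct. The associated-graded identification $V_d/V_{d+1}\cong\Pi(\Tet)_d$ is the right mechanism for the dimension count; the finiteness of the filtration (needed for the telescoping sum) follows since $\bigcap_d V_d=0$ (an entire function with vanishing Taylor series at the origin is zero) and $V$ is finite-dimensional. The injectivity step is also sound: your observation that, because $\Tet\subset\R^n$, the space $\Pi(\Tet)$ is the complexification of its real form---on which the Fischer pairing is genuinely positive definite---is exactly what is needed to close the argument over $\C$. The alternative you float (nondegeneracy of the induced bilinear form on $\Pi(\Tet)_{d^\ast}$) is not automatic for an arbitrary complex subspace of $\Pi^0_{d^\ast}$, so the real-form route is the one to take here.
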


\noindent
The {\bf least map} is then the association
$$\Tet\mapsto \Pi(\Tet),$$
with the polynomial space $\Pi(\Tet)$ known as the {\bf least space}
(of $\Tet$).

\smallskip
Applying the least map to the vertex set $\b(\B(X))$ of the
hyperplane arrangement $\calH(X)$ one obtains the following
algebro-geometric interpretation of the equality
$\dim\calD(X)=\#\B(X)$:

\goodbreak
\begin{theorem} [\cite{BR91}] $\Pi(\b(\B(X)))=\mathcal{D}(X)$, for every
generic choice of the constants $(\lam_x)_{x\in X}$.
\eop
\end{theorem}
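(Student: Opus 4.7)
Set $\Theta := \b(\B(X))$. Since genericity of $(\lam_x)_{x\in X}$ makes $\b$ injective, $\#\Theta=\#\B(X)$, and Theorem~\ref{leastthm} together with the already-quoted equality $\dim\calD(X)=\#\B(X)$ gives $\dim\Pi(\Theta)=\dim\calD(X)$. My plan is therefore to establish the inclusion $\Pi(\Theta)\subseteq\calD(X)$ and conclude equality by dimension.

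I would fix an arbitrary $f=\sum_{\tet\in\Theta}c_\tet e_\tet\in\Exp(\Theta)$, set $j:=\deg f\least$, and show $p(D)f\least=0$ for every $p\in\calJ(X)$. Because $\calJ(X)$ is a homogeneous ideal, this condition is equivalent to $p(D)f\least(0)=0$ for every homogeneous $p\in\calJ(X)$: for any monomial $t^\alpha$, the product $t^\alpha p$ still lies in $\calJ(X)$, so evaluation at $0$ controls every Taylor coefficient of $p(D)f\least$. Let $k:=\deg p$. If $k\ne j$, then $p(D)f\least$ either vanishes identically ($k>j$) or is homogeneous of positive degree ($k<j$), so the condition at $0$ holds automatically. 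The sole nontrivial case is $k=j$, in which $p(D)f\least$ is a constant equal to $p(D)f(0)=\sum_\tet c_\tet p(\tet)$, because when $p$ is homogeneous of degree $j$ only the degree-$j$ homogeneous part of $f$, namely $f\least$, contributes to $(p(D)f)(0)$.

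The theorem therefore reduces to the algebraic claim that, for every $j\ge 0$,
$$\calJ(X)_j\ \subseteq\ I_\Theta+\Pi_{<j},$$
where $I_\Theta$ denotes the vanishing ideal of $\Theta$ in $\Pi$, $\Pi_{<j}$ the polynomials of degree $<j$, and $\calJ(X)_j$ the degree-$j$ homogeneous component of $\calJ(X)$. Indeed, $\deg f\least=j$ forces $\sum_\tet c_\tet g(\tet)=0$ for every $g\in\Pi_{<j}$ (by reading off the vanishing Taylor coefficients of $f$ at $0$), while $f\in\Exp(\Theta)$ forces $\sum_\tet c_\tet h(\tet)=0$ for every $h\in I_\Theta$; together these yield $\sum_\tet c_\tet p(\tet)=0$ for every $p\in\calJ(X)_j$, which is exactly what is needed.

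To prove the claim I would lift each generator $p_Z$ of $\calJ(X)$ ($Z\in L(X)$) to $g_Z:=\prod_{x\in Z}q_x$. For $\tet=\b(B)\in\Theta$ one has $q_x(\tet)=0$ for every $x\in B$, and the identity $L(X)=\{Z:Z\cap B\ne\emptyset\ \forall B\in\B(X)\}$ recalled in the introduction gives $g_Z(\tet)=0$, so $g_Z\in I_\Theta$. Expanding $g_Z=\prod_{x\in Z}(p_x-\lam_x)$, its top-degree homogeneous part is exactly $p_Z$, so $p_Z-g_Z\in\Pi_{<|Z|}$. For any homogeneous $q$ of degree $j-|Z|$ this gives
$$q\,p_Z\ =\ q\,g_Z+q(p_Z-g_Z)\ \in\ I_\Theta+\Pi_{<j},$$
and since such products span $\calJ(X)_j$ the claim follows. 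The main conceptual hurdle is the degree-matching reduction in the second paragraph: recognising that only $k=j$ is nontrivial and that this value coincides with the pairing $\sum_\tet c_\tet p(\tet)$. Once isolated, the matroidal content of the statement collapses to the observation that each generator $p_Z$ of $\calJ(X)$ is the top-degree part of the naturally occurring element $g_Z\in I_\Theta$.
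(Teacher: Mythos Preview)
Your argument is correct. The paper does not give its own proof of this statement: it is quoted from \cite{BR91} and closed immediately with an \emph{end-of-proof} marker, and later the paper simply cites the inclusion $\Pi(V_\sk)\subset\calD_\sk$ from \cite{BR90} and combines it with the dimension equality to obtain the analogous corollary in the external setting. Your proof follows exactly this pattern---establish $\Pi(\Theta)\subseteq\calD(X)$ and conclude by dimension---and supplies the details the paper omits. The key step, lifting each generator $p_Z$ of $\calJ(X)$ to $g_Z=\prod_{x\in Z}q_x\in I_\Theta$ whose top-degree part recovers $p_Z$, is precisely the mechanism behind the cited inclusion, so your route is the standard one rather than a genuinely different approach.
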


The duality between
$\mathcal{P}(X)$ and $\mathcal{D}(X)$ then implies
that $\mathcal{P}(X)$
interpolates correctly on $\b(\B(X))$; we explain and elaborate on this
point in the sequel. In any event, the connection between the
explicit $\calP(X)$ and the explicit $\b(\B(X))$ can be established
directly without a recourse to the implicit $\calD(X)$,
and is done as follows. We use here the notation
$$q_Z:=\prod_{x\in Z}q_x,\quad Z\subset X.$$

\begin{theorem}
[\cite{DR}]%
\footnote{The result as stated is straightforward once one knows that
$\dim\mathcal{P}(X)=\#\B(X)$. However, the construction of the Lagrange basis
was originally used in \cite{DR} to prove this dimension formula.}
 \ Assuming the selection of constants
$(\lam_x)$ above to be generic, the polynomials $(q_{X{\bks}
B})_{B\in\B(X)}$ form a Lagrange basis for $\mathcal{P}(X)$
with respect to the vertex set $\b(\B(X))$: given $B\in
\B(X)$, the polynomial $q_{X{\bks} B}$ vanishes at all points
of $\b(\B(X))$ other than $\b(B)$.
\end{theorem}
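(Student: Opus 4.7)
The plan is to verify three things in order: (a) each polynomial $q_{X\setminus B}$ lies in $\mathcal{P}(X)$, (b) $q_{X\setminus B}$ vanishes on $\mathfrak{v}(B')$ for every $B'\neq B$, and (c) $q_{X\setminus B}$ does \emph{not} vanish on $\mathfrak{v}(B)$. Given these, a standard Lagrange-basis/Vandermonde argument together with the known dimension formula $\dim\mathcal{P}(X)=\#\mathbb{B}(X)$ finishes the proof.

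For (a), I would simply expand
\[
q_{X\setminus B}=\prod_{x\in X\setminus B}(p_x-\lambda_x)=\sum_{Z\subset X\setminus B}(-1)^{|X\setminus B|-|Z|}\Bigl(\prod_{y\in (X\setminus B)\setminus Z}\lambda_y\Bigr)\,p_Z.
\]
Each $Z$ appearing on the right is contained in $X\setminus B$, so $B\subset X\setminus Z$, which forces $\rank(X\setminus Z)=n$, i.e.\ $Z\in S(X)$. Hence every summand is a scalar multiple of a generator of $\mathcal{P}(X)$, so $q_{X\setminus B}\in\mathcal{P}(X)$.

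For (b), observe that if $B,B'\in\mathbb{B}(X)$ are distinct bases, then $|B|=|B'|=n$, so $B'\not\subset B$ and we may pick some $x\in B'\setminus B$. By construction $x\in X\setminus B$, so $q_x$ is one of the factors of $q_{X\setminus B}$; and because $\mathfrak{v}(B')$ is by definition the common zero of $\{q_y:y\in B'\}$, we have $q_x(\mathfrak{v}(B'))=0$, hence $q_{X\setminus B}(\mathfrak{v}(B'))=0$. For (c), I would invoke the generic-position hypothesis: if some $q_x$ with $x\in X\setminus B$ vanished at $\mathfrak{v}(B)$, then $\mathfrak{v}(B)$ would be a common zero of the $n+1$ polynomials $(q_y)_{y\in B\cup\{x\}}$, contradicting the injectivity assumption on $\mathfrak{v}$ (equivalently, the genericity of $(\lambda_x)$). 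So every factor $q_x$ of $q_{X\setminus B}$ is nonzero at $\mathfrak{v}(B)$, and therefore so is their product.

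Combining (b) and (c), the matrix $\bigl(q_{X\setminus B}(\mathfrak{v}(B'))\bigr)_{B,B'\in\mathbb{B}(X)}$ is diagonal with nonzero diagonal entries, so the polynomials $(q_{X\setminus B})_{B\in\mathbb{B}(X)}$ are linearly independent; this already establishes the Lagrange-interpolation property asserted in the theorem. Since by (a) they lie in $\mathcal{P}(X)$, and since their number equals $\dim\mathcal{P}(X)=\#\mathbb{B}(X)$, they form a basis. The only step that is not purely formal is (c): it is where the genericity hypothesis on $(\lambda_x)$ is genuinely used, and it is the point where one must be careful not to conflate the injectivity of $\mathfrak{v}$ with the stronger statement that no $q_x$, $x\in X\setminus B$, vanishes at $\mathfrak{v}(B)$ — the latter being precisely what the ``no $n+1$ hyperplanes concur'' form of genericity delivers.
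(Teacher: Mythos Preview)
Your proof is correct. The paper does not give its own proof of this theorem: it is quoted from \cite{DR}, and the footnote explicitly says that the result ``is straightforward once one knows that $\dim\mathcal{P}(X)=\#\mathbb{B}(X)$.'' Your argument is precisely that straightforward route: parts (a)--(c) are each a one-line verification, and you then appeal to the dimension formula to upgrade ``linearly independent Lagrange system'' to ``basis.'' Your handling of (c) is exactly right and matches the paper's stated form of genericity (no point is a common zero of $n+1$ of the $q_x$). The only historical remark worth making is the one the footnote already makes: in \cite{DR} the logic ran the other way --- the Lagrange construction was used to \emph{establish} $\dim\mathcal{P}(X)=\#\mathbb{B}(X)$ rather than to consume it --- but as a proof of the theorem as stated, your version is complete.
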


\bigskip\noindent {\bf External zonotopal algebra.} External zonotopal
algebra (in its homogeneous continuous setup) deals with polynomial
spaces that extend the (central) $\mathcal{P}$- and $\mathcal{D}$-
spaces above. This is done, \cite{HR}, by introducing a
complementary set $Y\subset\R^n$ and ordering the elements of $Y$ in
some fixed way
\begin{equation}\label{defY}
Y=\{y_1,y_2,\ldots\}.
\end{equation}
In \cite{HR} and \cite{HRX} $Y$ is a fixed, arbitrary, ordered basis for
$\R^n$. In the present paper, $Y$ is a (sufficiently long, see below)
sequence of
vectors in general position in $X\cup Y$: no vector $y\in Y$ is in the span
of fewer than $n$ vectors in $(X\cup Y)\bks y$. We assume $X\cup Y$ to
have full rank $n$, but make no such assumption on $X$.\footnote{To be sure,
a basis $B$ is for $\R^n$; therefore, if $\rank X<n$, we have
$\B(X)=\emptyset$, ignoring the fact that $X$ has an intrinsic rank and hence
a possibly non-empty set of intrinsic bases.}

Whatever the choice of the complementary (ordered) matroid $Y$ is,
one continues by selecting suitably a subset $\B'$ from the
basis set of the matroid $X\cup Y$:
$$\B'\subset \B(X\cup Y).$$
The selection is {\bf external} whenever $\B(X)\subset \B'$. The
corresponding $\calJ$-ideal (which is well-defined regardless whether
$\B'$ is external or not) is then defined as
\begin{equation}\label{J_B'}
 \calJ_{\B'}:=\Ideal\{p_Z\mid Z\subset X\cup Y,\ Z\cap B\not=\emptyset,\, \all
B\in \B'\}.
\end{equation}
The corresponding $\calD$-space
$$\calD_{\B'}$$
is then defined as the {\it kernel} of $\calJ_{\B'}$, i.e., the space
of all polynomials that are annihilated by all the differential
operators induced by $\calJ_{\B'}$. Equivalently, $\calD_{\B'}$
in the annihilator of $\calJ_{\B'}$ with respect to our pairing
(\ref{defpairing}):
$$f\in \calD_{\B'}\iff \inpro{f,\calJ_{\B'}}=0.$$
While we are interested in particular, structured, choices of $\B'$, we have
the following unqualified estimate on $\dim\calD_{\B'}$:

\begin{theorem} [\cite{BR91}]\label{br91thm} For an arbitrary $\B'\subset \B(X\cup Y)$,
\begin{equation}\label{BR91ineq}
\dim\calD_{\B'}\ge \#\B'.
\end{equation}
\end{theorem}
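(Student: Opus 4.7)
The plan is to construct, via the least map of \cite{BR90}, an explicit subspace of $\calD_{\B'}$ of dimension $\#\B'$, thereby generalizing to arbitrary $\B'\subset\B(X\cup Y)$ the argument behind Theorem 1.2 (which handles the central case $\B'=\B(X)$).

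First, I would fix a generic choice of constants $(\lam_x)_{x\in X\cup Y}$ and form, for each $B\in\B(X\cup Y)$, the vertex $\b(B)\in\R^n$ as in \eqref{defqx}. Genericity makes $\b$ injective on $\B(X\cup Y)$, so $\Tet:=\b(\B')$ has cardinality $\#\B'$, and Theorem~\ref{leastthm} then gives $\dim\Pi(\Tet)=\#\B'$. The theorem would therefore follow once I establish the inclusion $\Pi(\Tet)\subset\calD_{\B'}$.

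To prove that inclusion, take any generator $p_Z$ of $\calJ_{\B'}$ (so $Z\cap B\neq\emptyset$ for every $B\in\B'$) and form the translated product $q_Z:=\prod_{x\in Z}q_x$. Because each $\b(B)$ with $B\in\B'$ kills at least one factor $q_x$ with $x\in Z\cap B$, the polynomial $q_Z$ vanishes on $\Tet$. Together with the identity $q_Z(D)e_\tet=q_Z(\tet)e_\tet$, this shows that $q_Z(D)$ annihilates all of $\Exp(\Tet)$. The crux is to extract from this the vanishing of the top-degree component of $q_Z$ (which is precisely $p_Z$) against $f_\downarrow$, for every $f\in\Exp(\Tet)$. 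Writing $f=f_k+f_{k+1}+\cdots$ with $f_\downarrow=f_k$ and decomposing $q_Z=p_Z+(\text{lower degree})$, I would inspect the homogeneous component of $q_Z(D)f$ of degree $k-|Z|$: when $|Z|\le k$, the only contributing pair is $(d,j)=(|Z|,k)$ (all $d<|Z|$ terms would require $f_j$ with $j<k$, which is zero), yielding $p_Z(D)f_k\equiv 0$; when $|Z|>k$, this is automatic by degree. Since constant-coefficient differential operators commute, $p_Z(D)f_\downarrow=0$ implies $(r\cdot p_Z)(D)f_\downarrow=0$ for every polynomial $r$, and thus $f_\downarrow\in\calD_{\B'}$. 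Hence $\Pi(\Tet)\subset\calD_{\B'}$, and $\dim\calD_{\B'}\ge\dim\Pi(\Tet)=\#\B'$.

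The main obstacle I anticipate is the degree-matching in the expansion of $q_Z(D)f$: one has to verify that the ``lower order'' tail of $q_Z$ does not spoil the clean extraction of $p_Z(D)f_\downarrow$. This is precisely where the fact that $f$ has no homogeneous components below degree $k$ is decisive, and it is what forces the use of the \emph{lowest}-degree component (the least map) rather than the leading one. Everything else—the injectivity of $\b$, the vanishing of $q_Z$ on $\Tet$, and the passage from generators to the full ideal—is routine.
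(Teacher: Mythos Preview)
Your argument is correct and complete: the degree-matching step is carried out properly (the lowest nonzero homogeneous component of $q_Z(D)f$ is exactly $p_Z(D)f_\downarrow$, since pairing any lower-degree piece of $q_Z$ with $f_j$ would force $j<k$), and the passage from generators to the full ideal is straightforward.

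As for comparison with the paper: the paper does not supply its own proof of this theorem---it is quoted from \cite{BR91}. However, the paper does invoke precisely your mechanism later, citing \cite{BR90} for the inclusion $\Pi(V_\sk)\subset\calD_\sk$ and combining it with Theorem~\ref{leastthm} to obtain the corollary $\Pi(V_\sk)=\calD_\sk$. So your proposal reconstructs exactly the argument the paper relies on by citation; there is no divergence in approach.
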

Note that in the central case, when $\B'=\B(X)$, there is an equality
in (\ref{BR91ineq}). Indeed, we are  only interested in this particular case:

\begin{definition} We say that the external selection
$\B(X)\subset\B'\subset \B(X\cup Y)$ is {\bf coherent} if
$$\dim\calD_{\B'}=\#\B'.$$
\end{definition}
As said, \cite{HR} was the first to consider an external setup. It
chose $Y$ above to be an arbitrary (ordered) basis for $\R^n$, and
defined a set injection
$$\ex: \I(X)\to \B(X\cup Y),$$
via a greedy extension of each independent set to a basis using the elements
of $Y$. The corresponding $\calD$-space is then denoted there as
$\calD_+(X)$ and its corresponding ideal $\calJ_+(X)$. It is indeed proved
in \cite{HR} that $\B':=\ex(\I(X))$ is
coherent:
$$\dim\calD_+(X)=\#\I(X).$$
Subsequently the reference \cite{HRX} generalized the above external setup
by restricting the extension map $\ex$ to a subset $\I'$ of $\I(X)$ that
satisfies  an additional assumption:

\begin{definition} With $X$ as above, let $\I'\subset \I(X)$. We say that
$\I'$ is {\bf solid} if, given any $I'\in \I'$ and $I\in \I(X)$,
$$\Span I'\subset \Span I\implies I\in \I'.$$
\end{definition}
\cite{HRX} proved that $\B':=\ex(\I')$ is coherent, too, provided that
$\I'$ is solid (in $\I(X)$).

Both references \cite{HR} and \cite{HRX} build also suitable
hyperplane arrangements, select a subset $V$ of the vertex set of
the arrangement and prove that their corresponding $\calD$-space is
the least space of the vertex set $V$. We refer to \cite{HR,HRX} for
details.

\bigskip\noindent
{\bf $\calP$-spaces.} The original external version $\calP_+(X)$ was
introduced independently in \cite{PSS} and \cite{HR}. It is
defined as
$$\calP_+(X):=\Span\{p_Z\mid\ Z\subset X\}.$$
It is proved in \cite{HR} that $\calP_+(X)$ and $\calD_+(X)$ are
dual\footnote{Note that $\calP_+(X)$ depends only on $X$, while $\calD_+(X)$
depends on the order basis $Y$, too. The duality is thus valid regardless of
the way we choose $Y$.} or, in other words, that
$$\calJ_+(X)\oplus \calP_+(X)=\Pi.$$
This property definitely implies that $\dim\calP_+(X)=\dim\calD_+(X)$,
hence
$$\dim\calP_+(X)=\#\I(X).$$
In \cite{AP}, a more general version is defined: one fixes $k\ge 0$,
denotes by
$$\Pi_k$$
the space of all polynomials of degree $\le k$ (in $n$ variables), and
defines
$$\calP_{+k}(X):=\sum_{Z\subset X}p_Z\Pi_k.$$
The following can be deduced from \cite{AP}:
\begin{theorem}
$$\dim \calP_{+k}(X)=\sum_{I\in \I(X)}{n+k-\#I\choose k}.$$
\end{theorem}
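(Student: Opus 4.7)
My plan is to induct on $\#X$ via a matroidal deletion--contraction identity. Fix $x \in X$ that is neither a loop nor a coloop, and set $X' := X \bks x$; write $X/x$ for the contraction, viewed as a multiset in the quotient space $\R^n/\Span x$ of rank $n-1$. The independent sets of $X$ partition into those avoiding $x$ (which are exactly $\I(X')$) and those containing $x$ (which are $\{x\} \cup I'$ for $I' \in \I(X/x)$), giving the binomial identity
$$\sum_{I \in \I(X)} \binom{n+k-\#I}{k} = \sum_{I \in \I(X')} \binom{n+k-\#I}{k} + \sum_{I' \in \I(X/x)} \binom{(n-1)+k-\#I'}{k},$$
which is exactly the recursion I would establish on the $\calP_{+k}$ side.

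On the algebraic side, the goal is
$$\dim \calP_{+k}(X) = \dim \calP_{+k}(X') + \dim \calP_{+k}(X/x),$$
where $\calP_{+k}(X/x)$ is formed in $\C[t_1,\ldots,t_{n-1}]$ using the contracted matroid. After a change of coordinates, take $x = e_n$ so that $p_x = t_n$. Every $Z \subset X$ either avoids $x$ or is of the form $\{x\} \cup Z'$ with $Z' \subset X'$, so
$$\calP_{+k}(X) = \calP_{+k}(X') + t_n \cdot \calP_{+k}(X').$$
The restriction map $\psi : f \mapsto f|_{t_n = 0}$ sends $\calP_{+k}(X')$ onto $\calP_{+k}(X/x)$, because $p_y|_{t_n=0}$ is the linear form on $\R^{n-1}$ associated with the projection of $y$ modulo $\Span x$. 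Writing $U := \calP_{+k}(X')$ and using $\dim(U + t_n U) = 2 \dim U - \dim(U \cap t_n U)$, the recursion reduces to proving the structural identity $U \cap t_n \Pi = t_n U$. The base cases---$X = \emptyset$ (which gives $\calP_{+k}(\emptyset) = \Pi_k$, dimension $\binom{n+k}{k}$), together with loops and coloops---are handled by direct inspection.

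The main obstacle is precisely the structural identity $\calP_{+k}(X') \cap t_n \Pi = t_n \cdot \calP_{+k}(X')$, specifically the nontrivial inclusion that any element of $\calP_{+k}(X')$ divisible by $t_n$ already lies in $t_n \cdot \calP_{+k}(X')$. I expect this to follow from a duality argument modelled on $\Pi = \calJ_+(X) \oplus \calP_+(X)$ of \cite{HR}: introduce the companion ideal $\calJ_{+k}$, verify it is complementary to $\calP_{+k}$ under the pairing (\ref{defpairing}), and translate the $\calP$-side identity into the ideal colon statement $(\calJ_{+k}(X') : t_n) = \calJ_{+k}(X')$, which is transparent from the generators of the $\calJ$-ideal. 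Given the structural identity, induction closes and the claimed dimension formula follows.
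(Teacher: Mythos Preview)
The paper does not prove this theorem directly; it is quoted as a consequence of \cite{AP}. However, the paper's own machinery yields it as the special case $\sk\equiv k$ (constant), which is trivially solid and incremental: one first shows $\dim\calD_\sk=\#\B_\sk$ by the placability argument of \S2, then constructs the explicit polynomials $Q_B=q_{X_B\bks B}$, $B\in\B_\sk$, and proves in Theorem~\ref{thminc} that they span $\calP_\sk$. Your deletion--contraction strategy is a genuinely different route, and the combinatorial recursion on the right-hand side, the decomposition $\calP_{+k}(X)=\calP_{+k}(X')+t_n\calP_{+k}(X')$, and the identification of $\psi(\calP_{+k}(X'))$ with $\calP_{+k}(X/x)$ are all correct.

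There are two problems, one cosmetic and one substantive. The cosmetic one: the identity $U\cap t_n\Pi=t_n U$ is false as written, since $t_nU\not\subset U$ (take $U=\Pi_k$). What you actually need, and what you describe correctly in words, is only the inclusion $U\cap t_n\Pi\subset t_nU$; equivalently $U\cap t_nU=U\cap t_n\Pi$.

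The substantive gap is that you have not proved this inclusion, and the route you sketch is circular. You propose to introduce $\calJ_{+k}$, establish $\calP_{+k}\oplus\calJ_{+k}=\Pi$, and then read off a colon-ideal statement. But in this paper's framework the direct-sum decomposition is \emph{equivalent} to the equality $\dim\calP_\sk=\#\B_\sk$: Proposition~\ref{sum} gives only $\calP_\sk+\calJ_\sk=\Pi$, and promoting the sum to a direct sum is exactly Theorem~\ref{thminc}, whose proof uses the explicit spanning family $(Q_B)$ and the incremental hypothesis in an essential way. So invoking the duality to prove the dimension formula assumes what you are trying to show. Moreover, even granting the duality, the passage from ``$f\in\calP_{+k}(X')$ and $t_n\mid f$ imply $f/t_n\in\calP_{+k}(X')$'' to the colon statement $(\calJ_{+k}(X'):t_n)=\calJ_{+k}(X')$ is not the tautology you suggest: $\calP_{+k}(X')$ is a linear complement of the ideal, not a quotient ring, and multiplication by $t_n$ does not respect the splitting. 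Nor is $t_n$ being a nonzerodivisor on $\Pi/\calJ_{+k}(X')$ ``transparent from the generators'': the generators are products of linear forms from $X'\cup Y$, none of which is $t_n$, but that alone does not prevent $t_n$ from being a zerodivisor modulo the ideal. If you want to rescue the deletion--contraction argument, you need an independent proof of $U\cap t_n\Pi\subset t_nU$; one viable path is to use the homogeneous basis $p_{X'(B)}$, $B\in\B_\sk$, from \S3.2 (for the matroid $X'$ in $n$ variables) and argue directly on that basis---but at that point you are essentially reproducing the paper's method.
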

The original external space $\calP_+(X)$ thus corresponds to the
case $k=0$.

Two other papers introduce and study external $\calP$-variants:
\cite{HRX}, given a solid $\I'\subset \I(X)$, defines an intermediate
$$\calP(X)\subset \calP_{\I'}\subset \calP_+(X)$$
and proves its duality with $\calD_{\I'}:=\calD_{\ex{\I'}}$.
Recently, Lenz, in \cite{L}, introduced a setup that generalizes
\cite{HRX} as well as \cite{AP}: given a nonnegative integer $k$ and
an upper set $J\subset \mathcal{L}(X)$, where $ \mathcal{L}(X)$ is
the lattice of flats of the matroid $X$, he defines
$$\calP_{+k,J}:=\sum_{Z\subset X}p_{Z}\Pi_{k+\esp(X\bks Z)},$$
with $\esp$ the indicator function of $J$, and, for
$Z\subset X$, $\esp(Z):=\esp(\Span Z)$. He proved a suitable
dimension formula for this $\calP$-space.

\bigskip\noindent
{\bf Homogeneous basis for $\calP(X)$ and Hilbert functions.}  There
are no known explicit constructions of bases for $\calD$-type
spaces. In contrast, there are such basis constructions for the
central $\calP(X)$ and each of the external variants discussed
above. These constructions allow one (in theory) to compute the
Hilbert functions of those $\calP$-spaces. The only ``real''
construction is the one that was given in \cite{DR} for the central
$\calP(X)$ and is done as follows. Given $X$ as above, one fixes an
arbitrary order $\prec$ on the elements of $X$. Then, given
$B\in\B(X)$, one defines
\begin{equation}\label{defxb}
X(B):=\{x\in X\bks B\mid x\not\in\ {\rm span}\{b\in B\mid b\prec x\}\}.
\end{equation}
The cardinality of $X(B)$ is  intimately connected to the {\it
external activity} of $B$, which equals to $ \#(X\bks B)-\#X(B)$ (see, e.g.,
\cite{Biggs}).

\begin{theorem}\label{homobasis} {\rm \cite{DR}}
The polynomials
$$p_{X(B)},\quad B\in \B(X)$$
form a basis for $\calP(X)$.
\end{theorem}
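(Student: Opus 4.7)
The plan is to establish Theorem~\ref{homobasis} in three steps.

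\textbf{Step 1: Membership.} Each $p_{X(B)}$ belongs to $\calP(X)$: by the definition \eqref{defxb} we have $X(B)\subseteq X\bks B$, so $B\subseteq X\bks X(B)$; since $B$ is a basis of $\R^n$, $\rank(X\bks X(B))=n$, so $X(B)\in S(X)$ and therefore $p_{X(B)}\in\calP(X)$.

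\textbf{Step 2: Dimension reduction.} We have already recorded that $\dim\calP(X)=\#\B(X)$ (\cite{DM,DR}). It therefore suffices to show that the $\#\B(X)$ polynomials $\{p_{X(B)}:B\in\B(X)\}$ span $\calP(X)$: once spanning is established, the dimension count forces both linear independence and injectivity of the map $B\mapsto X(B)$. Because $\calP(X)=\Span\{p_Z:Z\in S(X)\}$ by definition, the task is to express every $p_Z$ with $Z$ short as a linear combination of the $p_{X(B)}$'s.

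\textbf{Step 3: Circuit reduction.} Any circuit $C\subseteq X$ admits a linear dependence $\sum_{c\in C}a_cc=0$ with all $a_c\ne 0$, yielding the polynomial identity $\sum_{c\in C}a_cp_c=0$ on $\R^n$. Fix $z_0\in C$ and a short $Z$ with $Z\cap C=\{z_0\}$; multiplying by $p_{Z\bks z_0}$ and solving for $p_Z$ gives
$$p_Z\;=\;-\frac{1}{a_{z_0}}\sum_{c\in C,\,c\ne z_0}a_c\,p_{(Z\bks z_0)\cup c}.$$
If in addition $c\prec z_0$ for every $c\in C\bks z_0$, then each summand on the right is indexed by a short set strictly earlier than $Z$ in the lexicographic order on $2^X$ induced by $\prec$. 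The combinatorial heart of the argument is the claim: \emph{for every short $Z$ which is not of the form $X(B)$ there is a pair $(z_0,C)$ of the above form}; a candidate $z_0$ is any witness that $Z\ne X(B(Z))$ for the greedy $\prec$-basis $B(Z)$ of $X\bks Z$, and such a witness forces a circuit $C\subseteq\{z_0\}\cup\{c\in X\bks Z:c\prec z_0\}$ with $z_0\in C$. Iterating the rewriting, the process must terminate, and it can only terminate at linear combinations of $p_{X(B)}$'s.

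The main obstacle is step~3: one must (a) produce, for every short $Z\ne X(B)$ (all $B$), a circuit of the prescribed shape, and (b) verify that the induced rewriting strictly descends in the chosen lex order so that the reduction halts precisely at the sets $X(B)$. This is in essence the classical ``no broken circuit'' analysis for the matroid $X$, and handling it carefully automatically yields the injectivity of $B\mapsto X(B)$, completing the passage from a spanning set to a basis of $\calP(X)$.
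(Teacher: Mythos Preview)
The paper does not prove Theorem~\ref{homobasis}; it is quoted from \cite{DR} as background, so there is no in-paper argument to compare against.

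Your outline is sound and can be completed. Step~1 is correct, and the reduction in Step~2 is legitimate given the dimension formula cited from \cite{DM,DR}. The substance is Step~3, and the details you flag as the ``main obstacle'' do go through: (a) for the greedy basis $B(Z)$ of $X\bks Z$ one always has $X(B(Z))\subseteq Z$, since any $x\in(X\bks Z)\bks B(Z)$ was rejected by the greedy procedure and hence lies in $\Span\{b\in B(Z):b\prec x\}$, so $x\notin X(B(Z))$; thus the witness $z_0$ must lie in $Z\bks X(B(Z))$, and then $z_0\in\Span\{b\in B(Z):b\prec z_0\}$ furnishes the required circuit $C$ with $z_0=\max_\prec C$ and $C\bks z_0\subseteq X\bks Z$. (b) Each replacement set $(Z\bks z_0)\cup c$ is again short, because $c\in\Span(C\bks c)\subseteq\Span\big(((X\bks Z)\bks c)\cup\{z_0\}\big)$. (c) The rewriting strictly decreases the sorted tuple of $\prec$-positions of elements of $Z$, a well-founded order, so the process terminates; and a terminal $Z$ is exactly one with $Z=X(B(Z))$. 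With these points spelled out, your spanning argument is complete, and the dimension count then forces linear independence and the injectivity of $B\mapsto X(B)$ as you say.

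One remark on emphasis: you deduce independence from spanning plus $\dim\calP(X)=\#\B(X)$. As the paper's footnote to the Lagrange-basis theorem indicates, \cite{DR} originally ran the logic the other way, using the inhomogeneous basis $(q_{X\bks B})_{B\in\B(X)}$ to \emph{establish} the dimension formula. Your circuit-straightening route is therefore a genuine alternative, closer in spirit to the classical no-broken-circuit analysis, and has the virtue of being self-contained on the $\calP$-side once the dimension is granted.
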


\medskip
The construction of homogeneous bases for external $\calP$-spaces is
obtained as a variation of the above construction, using the
following approach.  Suppose that we have defined a $\calD$-space
$\calD_{\B'}$, corresponding to the basis set $\B'\subset \B(X\cup
Y)$, and a related $\calP_{\B'}$ and proved a duality between the
$\calD$- and the $\calP$- space. Now, necessarily,
$$\calP_{\B'}\subset \calP(X\cup Y).$$
Thus,  we construct a homogeneous basis for $\calP(X\cup Y)$ as
above, and select the basis polynomials that correspond
to $B\in \B'$. These polynomials are automatically linearly independent.
Assuming that $\B'$ is coherent,  we combine this coherence together
with the assumed duality between $\calP_{\B'}$ and $\calD_{\B'}$
to conclude that
$$\dim\calP_{\B'}=\dim\calD_{\B'}=\#\B'.$$
Thus, the polynomials selected above will form a basis for
$\calP_{\B'}$ once we show that each of them actually lies in
$\calP_{\B'}$.

This approach was, at least implicitly, used in \cite{HR,HRX} for the
construction of homogeneous bases for the external $\calP$-spaces that
were studied there. \cite{AP,L} used other methods since they introduced
$\calP$-spaces without corresponding $\calD$-spaces.

Given any homogeneous polynomial space, $\calP$, the construction of
a homogeneous basis $(Q_B)_{B\in \B'}$, with $\B'$ some index set,
allows one to compute the Hilbert function of that $\calP$-space,
i.e., the function
$$h_X:k\mapsto \dim(\calP\cap \Pi_k^0),$$
with $\Pi_k^0$ the space of homogeneous polynomials of degree $k$. In the
description above, the Hilbert function is combinatorial/matroidal:
\begin{equation}\label{defhx}
h_X(k)=\#\{B\in \B'\mid \#(X(B))=k\}.
\end{equation}

\bigskip\noindent
{\bf Our setup.} Our setup provides a general unified theory and analysis
that captures all above-mentioned efforts as special cases. A key to our
approach is the simultaneous development of the two
types of spaces: $\calD$- and $\calP$- ones. Given
our multiset $X$ (which, in contrast with previous studies like
the one in \cite{HR}, is not assumed to be necessarily of full rank),
we begin with an assignment
$$\strangek: 2^X\to\N,$$
which is solid:

\begin{definition}\label{solid} An assignment $\strangek$ as above is {\bf solid} if,
given $Z,Z'\subset X$, we have
$$\Span Z\subset \Span Z' \implies \strangek(Z)\le \strangek(Z').$$
\end{definition}

Given a solid assignment $\strangek$, we define the $\calP$-space as
$$\calP_{\strangek}:=\sum_{Z\subset X}p_{X{\bks} Z}\Pi_{\strangek(Z)}.$$
In order to augment this definition with a corresponding $\calD$-space, we
choose $Y=\{y_1,y_2,\ldots\}$  to contain sufficiently many vectors in general
position (in $X\cup Y$, cf.\ the discussion after (\ref{defY})), and denote
\begin{equation}\label{defYm}
Y_{i}:=\{y_1,\ldots,y_{i}\}, \quad i>0,
\end{equation}
and $Y_i=\emptyset$ if $i\le 0$.
The associated basis set $\B':=\B_{\strangek}\subset \B(X\cup Y)$ is defined as
follows:
\begin{equation}\label{B_k}
\B_{\strangek}:=\{B\in \B(X\cup Y)\mid\ B\cap Y\subset Y_{m(B\cap
X)}\},
\end{equation}
where, for an independent $I\in \I(X)$,
$$m(I):=\strangek(I)+n-\#I.$$
It follows that each independent $I\subset X$ can be extended in
${m(I)\choose \sk(I)}$ different ways to a basis in $\B_\sk$, hence
that
$$\#\B_{\strangek}=\sum_{I\in \I(X)}{m(I)\choose \strangek(I)}.$$

\begin{example}\label{eg1}Let $X=\{x_1,x_2\}\subset \mathbb{R}^2$, where
$x_1=\binom{1}{0}$ and $x_2=\binom{0}{1}$. Assume that $\sk$ is
solid and that $\strangek(x_1)=\strangek(x_2).$ It then easily
follows that
\begin{equation}\label{bk}
\#\bprime=\binom{2+\sk(\emptyset)}{2}+2\sk(x_1)+3.
\end{equation}
\end{example}

\medskip
The $\calD$-space $\calD_\sk$ is defined as
$$\calD_\sk:=\calD_{\B_\sk}=\ker\calJ_{\B_\sk},$$
where $\calJ_{\B_\sk}$ is defined in (\ref{J_B'}) with respect to the choice
$\B'=\B_{\sk}$. As before, we associate each $z\in X\cup Y$ with a
constant $\lam_z$ and assume the assignment to be generic. Every
$B\in\B(X\cup Y)$ then corresponds to $\b(B):=$ the common zero of
the polynomials $(q_z)_{z\in B}$, and, by assumption, the map
$$\b:\B(X\cup Y)\to \R^n \quad : \quad B\mapsto \b(B)$$
is injective. We denote
$$V_{\strangek}:=\b(\B_{\strangek}).$$

At this generality, we are able to prove only partial results:

\begin{theorem}\label{thmpartial} Let $\strangek$ be a solid assignment. Then:
\begin{itemize}
\item  $\B_{\strangek}$ is coherent, i.e.,
$\dim\calD_{\strangek}=\#\B_{\strangek}$. Furthermore,
$\Pi(V_{\strangek})=\calD_{\strangek}$.
\item $\calP_{\strangek}+\calJ_{\strangek}=\Pi.$
\item $\calP_{\strangek}$ contains a Lagrange basis for $V_{\strangek}$:
for each $v\in V_{\strangek}$ there exists $L_v\in \calP_{\strangek}$,
such that $L_v$ vanishes on $V_{\strangek}\bks v$, but not at $v$.
\end{itemize}
\end{theorem}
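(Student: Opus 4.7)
The plan is to treat the three parts jointly: first establish the inclusion $\Pi(V_\sk)\subseteq\calD_\sk$ via the least-map duality of \cite{BR90}, then build a Lagrange basis inside $\calP_\sk$ by induction on $|I|$, and finally close with dimension identities. For the inclusion: every generator $p_Z$ of $\calJ_\sk$ corresponds to a set $Z\subseteq X\cup Y$ meeting every $B\in\B_\sk$. Genericity of $(\lam_z)$ gives $q_z(\b(B))=0$ iff $z\in B$, so $q_Z=\prod_{z\in Z}q_z$ vanishes on all of $V_\sk$, i.e.\ $q_Z\in I(V_\sk)$. Since $p_Z=(q_Z)\least$, the duality of \cite{BR90} between $\Pi(V_\sk)$ and $I(V_\sk)\least$ yields $p_Z(D)f=0$ for every $f\in\Pi(V_\sk)$, and the $D$-invariance of least spaces extends this to the full ideal, so $\Pi(V_\sk)\subseteq\calD_\sk$. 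Theorem~\ref{leastthm} gives $\dim\Pi(V_\sk)=\#V_\sk=\#\B_\sk$, matching the lower bound from Theorem~\ref{br91thm}.

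\textbf{Building the Lagrange basis (Part (iii)).} For $B\in\B_\sk$, let $I:=B\cap X$, $J:=B\cap Y\subseteq Y_{m(I)}$, and $K_B:=Y_{m(I)}\bks J$, of cardinality $\sk(I)$. The candidate $L_B^{0}:=q_{X\bks I}\cdot q_{K_B}$ will be shown to lie in $\calP_\sk$ by expanding $q_{X\bks I}=\sum_{Z\supseteq I}c_Z\,p_{X\bks Z}$; each summand $c_Z\,p_{X\bks Z}\cdot q_{K_B}$ sits in $p_{X\bks Z}\Pi_{\sk(Z)}$ because $\deg q_{K_B}=\sk(I)\le\sk(Z)$ by solidity applied to $I\subseteq Z$. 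Checking the Lagrange property at $\b(B')$, $B'=I'\cup J'\in\B_\sk\bks\{B\}$: the $q_{X\bks I}$ factor vanishes when $I'\not\subseteq I$, and the $q_{K_B}$ factor vanishes when $I'=I$ with $J'\ne J$ (since $\emptyset\ne J'\bks J\subseteq K_B$). The only residual obstructions are $I'\subsetneq I$ with $\sk(I')=\sk(I)$; a cardinality count using $|J'|=n-|I'|$ and $J'\subseteq Y_{m(I')}$ forces the unique problematic basis
\[
 B'_{I'}:=I'\cup J\cup\{y_{m(I)+1},\ldots,y_{m(I')}\}\in\B_\sk.
\]
I induct on $|I|$: the base case $|I|=0$ has no exceptional $I'$ and $L_B^{0}$ is already Lagrange. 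For $|I|>0$, assuming $L_{B'_{I'}}\in\calP_\sk$ has been built as Lagrange at $\b(B'_{I'})$ for every exceptional $I'$, set
\[
 L_B:=L_B^{0}-\sum_{I'\subsetneq I,\,\sk(I')=\sk(I)}\frac{L_B^{0}(\b(B'_{I'}))}{L_{B'_{I'}}(\b(B'_{I'}))}\,L_{B'_{I'}}.
\]
Each $L_{B'_{I'}}$ vanishes on $V_\sk\bks\{\b(B'_{I'})\}$, so the subtraction cancels the unwanted value at $\b(B'_{I'})$ without disturbing the Lagrange property at any other vertex of $V_\sk$; hence $L_B\in\calP_\sk$ is Lagrange at $\b(B)$.

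\textbf{Coherence, equality $\calD_\sk=\Pi(V_\sk)$, and Part (ii).} The Lagrange basis $\{L_B\}_{B\in\B_\sk}\subseteq\calP_\sk$ restricts bijectively to the standard basis of $\C^{V_\sk}$, yielding $\calP_\sk+I(V_\sk)=\Pi$ and $\dim\calP_\sk\ge\#\B_\sk$. To convert $I(V_\sk)$ into $\calJ_\sk$, the plan is to establish the identification $\calJ_\sk=I(V_\sk)\least$: the forward inclusion is the first step, and the reverse uses the generator description of $I(V_\sk)$ (by the $q_Z$ for $Z$ meeting every $B\in\B_\sk$, in this generic vertex-set setup) plus a degree-filtration argument in which solidity of $\sk$ ensures that the least parts $\{p_Z\}$ suffice to generate $I(V_\sk)\least$. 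This identification forces $\dim\Pi/\calJ_\sk=\dim\Pi/I(V_\sk)\least=\#V_\sk=\#\B_\sk$, whence coherence $\dim\calD_\sk=\#\B_\sk$ and $\calD_\sk=\Pi(V_\sk)$ by matching dimensions. For Part (ii), each generator $q_Z$ of $I(V_\sk)$ decomposes as $q_Z=p_Z+(\text{lower-degree terms})$ with $p_Z\in\calJ_\sk$; reducing the tail into $\calP_\sk+\calJ_\sk$ by induction on degree yields $I(V_\sk)\subseteq\calP_\sk+\calJ_\sk$, and combining with $\calP_\sk+I(V_\sk)=\Pi$ gives $\calP_\sk+\calJ_\sk=\Pi$.

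\textbf{Main obstacle.} The technical heart is the inductive Lagrange construction: verifying uniqueness of $B'_{I'}$ and arranging the induction without circularity both depend on a careful cardinality count combining solidity with the staircase definition of $\B_\sk$ (via $m(I')\le m(I)+|I|-|I'|$ with equality exactly in the exceptional case). A second delicate point is the identification $\calJ_\sk=I(V_\sk)\least$, whose reverse inclusion rests on a Gr\"obner-style analysis of $I(V_\sk)$ under the degree filtration; there the full strength of solidity together with the general-position hypothesis on $Y$ enters essentially.
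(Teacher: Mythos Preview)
Your Lagrange construction for Part~(iii) is correct and gives a genuine alternative to the paper's approach: the paper kills the residual values at the exceptional bases $B'_{I'}$ not by inductive back-substitution but by multiplying $Q_B$ by a single linear form $\ell_B$ (essentially the component of $q_{y_{m(I)+1}}$ along those $x\in I$ with $\sk(I\bks x)=\sk(I)$, modulo the $q_x$ with $x\in J\cup I''$, which already vanish on the exceptional set). Your inductive subtraction is arguably more transparent, though it produces Lagrange polynomials of higher degree.

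However, your argument for coherence in Part~(i) has a genuine gap. You propose to obtain the upper bound $\dim\calD_\sk\le\#\B_\sk$ from the identification $\calJ_\sk=\mathrm{gr}\,I(V_\sk)$ (which you write as $I(V_\sk)\least$; note incidentally that $p_Z$ is the \emph{leading} form of $q_Z$, not its least part in the paper's sense---if all $\lam_z\ne 0$ then $(q_Z)\least$ is a nonzero constant). The forward inclusion $\calJ_\sk\subseteq\mathrm{gr}\,I(V_\sk)$ is indeed immediate. But the reverse inclusion is \emph{equivalent} to coherence: one always has $\dim\Pi/\mathrm{gr}\,I(V_\sk)=\dim\Pi/I(V_\sk)=\#V_\sk=\#\B_\sk$, while $\dim\Pi/\calJ_\sk=\dim\calD_\sk\ge\#\B_\sk$ by Theorem~\ref{br91thm}, so $\mathrm{gr}\,I(V_\sk)\subseteq\calJ_\sk$ holds iff $\dim\calD_\sk=\#\B_\sk$. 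Your appeal to a ``generator description of $I(V_\sk)$ by the $q_Z$'' together with a ``Gr\"obner-style analysis'' is precisely the statement to be proved, not a proof of it; there is no a~priori reason for the vanishing ideal of an arbitrary subset of arrangement vertices to be generated by products of the defining affine forms, nor for such products to form a Gr\"obner basis. The paper closes this gap by an entirely different and genuinely combinatorial route: it shows that $\B_\sk$ is \emph{placible} (admits a recursive deletion/restriction splitting down to singletons by placable elements---this is where solidity is used, in Proposition~\ref{placable}), and then invokes the result of \cite{BRS96} that placibility implies $\dim\calD_{\B'}\le\#\B'$.

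Your Part~(ii) argument inherits the same gap: the step $I(V_\sk)\subseteq\calP_\sk+\calJ_\sk$ again presupposes that the $q_Z$ generate $I(V_\sk)$, and even granting that, the ``induction on degree'' for the tail $q_Z-p_Z$ is not well-founded, since that tail is an arbitrary lower-degree polynomial, not an element of $I(V_\sk)$. The paper instead proves $\calP_\sk+\calJ_\sk=\Pi$ by a self-contained direct argument (Proposition~\ref{sum}): it shows by descending induction that every product $f\,p_{X\bks X'}\,p_{Y'}$ with $Y'\subset Y_{n-\rank X'+\#Y'-1}$ lies in $\calP_\sk+\calJ_\sk$, the base case $\#Y'>\sk(X')$ landing in $\calJ_\sk$ by a pigeonhole argument on $\B_\sk$.
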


\noindent
A few remarks are then in order:

1. We provide an explicit construction of the aforementioned Lagrange
basis.

2. The second result in the above theorem implies that
$\dim\calP_{\strangek}\ge \dim\calD_{\strangek}$. Simple examples show that
this inequality can be sharp.

3. The third result implies that $\dim\calP_{\strangek}\ge
\#V_{\strangek}=\#\B_{\strangek}$. This inequality follows also from
the second result, since (\ref{BR91ineq}),
$$\dim\calD_{\strangek}\ge \#\B_{\strangek}$$
even without the solid assumption.

4. We also identify in $\calP_\sk$ a family of $\#\B_\sk$ linearly
independent homogeneous polynomials. That construction not only reproves
the inequality $\dim\calP_\sk\ge \#\B_\sk$, but also provides a lower bound
on the values assumed by the Hilbert function of $\calP_\sk$.

\medskip
Stronger results are obtained once
we make an additional assumption:

\begin{definition} We say that an assignment $\sk$ is {\bf incremental}
if, for every $Z\subset X$ and $x\in X$,
$$\strangek(Z\cup x)\le \strangek(Z)+1.$$
\end{definition}

Indeed,
we obtain a complete theory for assignments that are both solid and
incremental:

\begin{theorem}\label{mainthmintro} Assume the assignment $\strangek$ to be
solid and incremental.  Set $X':=X\cup Y$, and, for $I\in \I(X)$,
$X'_I:=X\cup Y_{m(I)}$.
Then
\begin{itemize}
\item The polynomials
$$q_{(X'_{B\cap X})\bks B},\quad B\in \B_{\strangek}$$
form an inhomogeneous basis for $\calP_{\strangek}$.
In particular,
$$\dim\calP_{\strangek}=\#\B_{\strangek}.$$
\item The polynomials
$$p_{X'(B)},
\quad B\in \B_{\strangek}$$
form a homogeneous basis for $\calP_{\strangek}$.%
\footnote{ The notation
$X'(B)$ is defined in (\ref{defxb}), with $X$ there replaced by $X'$ here.}
\end{itemize}
\end{theorem}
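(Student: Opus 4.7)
The plan is to treat both bullets together in three steps: (i) each candidate family lies in $\calP_\sk$; (ii) each is linearly independent; (iii) $\dim\calP_\sk=\#\B_\sk$, forcing the two $\#\B_\sk$-element independent families to be bases.

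For (i), fix $B\in\B_\sk$ and set $I:=B\cap X$. Expanding $q_{X'_I\bks B}=\prod_{z\in X'_I\bks B}(p_z-\lam_z)$ monomially, each term has the form $p_S=p_{X\bks Z}\cdot p_{S_Y}$, where $S=S_X\sqcup S_Y$ with $S_X\subseteq X\bks I$, $S_Y\subseteq Y_{m(I)}\bks(B\cap Y)$, and $Z:=I\cup((X\bks I)\bks S_X)\supseteq I$. Since $\#S_Y\le m(I)-(n-\#I)=\sk(I)$ and solidity gives $\sk(Z)\ge\sk(I)$, $p_S\in p_{X\bks Z}\Pi_{\sk(Z)}\subseteq\calP_\sk$. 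For the $p$-family, order $X':=X\cup Y$ so that $X$-elements precede $Y$-elements (the latter in the natural order $y_1\prec y_2\prec\cdots$) and split $X'(B)=(X'(B)\cap X)\sqcup(X'(B)\cap Y)$. General position of $Y$ in $X'$ yields $X'(B)\cap Y=Y_{j_{\max}(B)}\bks(B\cap Y)$, where $j_{\max}(B)$ is the largest index of any $Y$-element in $B$; since $B\in\B_\sk$ forces $j_{\max}(B)\le m(I)$, $\#(X'(B)\cap Y)\le\sk(I)\le\sk(Z)$ for $Z:=X\bks(X'(B)\cap X)\supseteq I$, so $p_{X'(B)}=p_{X\bks Z}\cdot p_{X'(B)\cap Y}\in\calP_\sk$.

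For (ii), form the matrix $M_{B,B'}:=q_{X'_{B\cap X}\bks B}(\b(B'))$ on $\B_\sk\times\B_\sk$. Genericity of $(\lam_z)$ makes $q_z(\b(B'))=0$ iff $z\in B'$, so $M_{B,B'}\ne 0$ iff $(X'_{B\cap X}\bks B)\cap B'=\emptyset$. A short case check (if $B'\cap X\not\subseteq B\cap X$, then some $x\in B'\cap X$ sits in $X\bks(B\cap X)\subseteq X'_{B\cap X}\bks B$; if $B'\cap X=B\cap X=I$ and $B'\ne B$, then $(B'\cap Y)\bks(B\cap Y)$ is a nonempty subset of $Y_{m(I)}\bks(B\cap Y)\subseteq X'_I\bks B$) forces either $B=B'$ or $\#(B'\cap X)<\#(B\cap X)$. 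Ordering $\B_\sk$ so that $\#(B\cap X)$ is nonincreasing, $M$ is upper triangular with nonvanishing diagonal, proving independence of the $q$-family. Independence of the $p$-family is immediate: Theorem~\ref{homobasis} applied to the full-rank set $X'$ yields a basis $\{p_{X'(B)}:B\in\B(X')\}$ of $\calP(X')\supseteq\calP_\sk$, so the subfamily indexed by $\B_\sk\subseteq\B(X')$ is independent.

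For (iii), I combine the central duality $\calP(X')\oplus\calJ(X')=\Pi$ with the inclusion $\calJ(X')\subseteq\calJ_\sk$. A combinatorial argument using solidity shows $p_{X'(B)}\in\calJ_\sk$ for every $B\in\B(X')\bks\B_\sk$ (namely, $X'(B)$ meets every $B'\in\B_\sk$ in this case: otherwise, rank and span estimates on $I':=B'\cap X$ and $I$ together with the definition of $\B_\sk$ force $\sk(I')>\sk(I)$ with $\Span I'\subseteq\Span I$, contradicting solidity). A dimension count, using the identification $\dim(\Pi/\calJ_\sk)=\#\B_\sk$ from Macaulay duality (which applies because $\calJ_\sk$ is homogeneously generated and $\calD_\sk$ is finite-dimensional and closed under differentiation), then promotes this to $\calJ_\sk=\Span\{p_{X'(B)}:B\notin\B_\sk\}\oplus\calJ(X')$, giving $\Span\{p_{X'(B)}:B\in\B_\sk\}\cap\calJ_\sk=0$. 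Together with $\calP_\sk+\calJ_\sk=\Pi$ (Theorem~\ref{thmpartial}), this decomposes $\calP_\sk=\Span\{p_{X'(B)}:B\in\B_\sk\}\oplus(\calP_\sk\cap\calJ_\sk)$, reducing the theorem to showing the residual $\calP_\sk\cap\calJ_\sk=0$, i.e., that no nonzero element of $\calP_\sk$ lies in $\Span\{p_{X'(B)}:B\notin\B_\sk\}$.

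The crux is this final vanishing, and this is where the incremental hypothesis must do its work. All steps above use only solidity, and Theorem~\ref{thmpartial} plus solidity give only the lower bound $\dim\calP_\sk\ge\#\B_\sk$; examples with solid but non-incremental $\sk$ can produce $\dim\calP_\sk>\#\B_\sk$, so the incremental bound $\sk(Z\cup x)\le\sk(Z)+1$ cannot be dispensed with. I expect the verification of $\calP_\sk\cap\Span\{p_{X'(B)}:B\notin\B_\sk\}=0$ to proceed by showing that a nonzero element there would impose $\sk$-degree combinations in the stratification $\calP_\sk=\sum_Z p_{X\bks Z}\Pi_{\sk(Z)}$ forbidden by the rank-$1$ increment bound on $\sk$ along flats; pinning down this cancellation is the central technical obstacle of the proof.
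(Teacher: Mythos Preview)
Your steps (i) and (ii) are correct. Membership of both families in $\calP_\sk$ is argued exactly as in the paper's \S3.2--3.3, and your triangularity argument for the $q$-family via evaluation on $V_\sk$ is a valid (and pleasant) direct alternative to the paper, which obtains their independence only \emph{a posteriori} from spanning. Your structural claim that $p_{X'(B)}\in\calJ_\sk$ for $B\in\B(X')\bks\B_\sk$ is also correct (and not stated in the paper), and combined with coherence it does identify $\calJ_\sk=\calJ(X')\oplus\Span\{p_{X'(B)}:B\notin\B_\sk\}$.

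The gap is in (iii), and it is the whole crux. Your reduction to $\calP_\sk\cap\calJ_\sk=0$ is logically sound but circular: since you already have $\calP_\sk+\calJ_\sk=\Pi$ and $\dim(\Pi/\calJ_\sk)=\#\B_\sk$, the vanishing of $\calP_\sk\cap\calJ_\sk$ is \emph{equivalent} to $\dim\calP_\sk=\#\B_\sk$, so nothing has been gained. You concede this step is unproved and offer only a heuristic. The paper's argument is entirely different and does not pass through $\calJ_\sk$ at all: it shows directly that $\calQ:=\Span\{Q_B:B\in\B_\sk\}$ contains every $q_{X\bks I}\Pi_{\sk(I)}$ for $I\in\I(X)$, by induction on $\#I$. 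One spans $\Pi_{\sk(I)}$ by products $q_W$ with $W\subset I\cup Y_{m(I)}$, $\#W=\sk(I)$; when $W\subset Y$ the product $q_{X\bks I}q_W$ is literally some $Q_B$, and when $W$ meets $I$ one writes $q_{X\bks I}q_W=q_{X\bks I'}\,q_{W\bks X}$ with $I'=I\bks(W\cap X)$ and invokes the induction hypothesis on the smaller $I'$. \emph{This} is where incrementality enters: it guarantees $\sk(I')\ge\sk(I)-\#(W\cap X)=\deg q_{W\bks X}$, so $q_{W\bks X}\in\Pi_{\sk(I')}$ and the induction applies. Extending from independent $I$ to arbitrary $Z$, and then from $q_{X\bks Z}$ to $p_{X\bks Z}$, is routine. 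Your outline never locates where incrementality acts; the paper's inductive spanning argument is the missing idea.
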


It follows from this result that the Lagrange basis in Theorem
\ref{thmpartial} is also
a basis for $\calP_{\strangek}$. Also, we can now conclude that
$$\calJ_{\strangek}\oplus \calP_{\strangek}=\Pi,$$
or in other words that $\calP_{\strangek}$ and $\calD_{\strangek}$ are dual
to each other.

Finally, the construction of a homogeneous basis for $\calP_\sk$ 
leads to a combinatorial formula for the Hilbert function $h_\sk$ of 
$\calP_\sk$, which, due to the duality between $\calP_\sk$ and $\calD_\sk$, 
is also the Hilbert function of $\calD_\sk$: for $j\ge 0$ we have
\begin{equation} \label{hilform}
h_\sk(j)=h_X(j)+\sum{j-\#X(I)+n-\#I-1\choose n-\#I-1},
\end{equation}
where the sum runs over all $I\in \I(X)\bks \B(X)$ for which
$j-\sk(I)\le\#(X(I))\le j$, and with $h_X$ the Hilbert function of
$\calP(X)$ (cf.\ (\ref{defhx})).

\begin{proof} Given $j\ge 0$, we need to count the number of polynomials
$p_{X'(B)}$ in the homogeneous basis for $\calP_\sk$ that are of degree $j$
(cf.\ Theorem \ref{mainthmintro}). In other words, we need to find out the
number
$$\#\{B\in \B_\sk\mid \#X'(B)=j\}.$$
Since
$$h_X(j)=\#\{B\in \B(X)\mid \#X'(B)=j\},$$
we need only to focus on $\B_\sk\bks\B(X)$.
To this end, we write $B=I\cup J\in\B_{\sk}\bks \B(X)$ with $I\subset X$
and $J\subset Y$; also, let $y_k$ be the maximal element of $J$. Then
$X'(B)=X(I)\cup (Y_k\bks J)$. Since  we need to have
$\#X'(B)=j$, it is necessary that
$0\le j-\#X(I)\le \sk(I)$. Once our $I$ is fixed, $y_k$, the last element
of $J$, has to satisfy that $k=j-\#X(I)+n-\#I$.
Then, we can freely choose
the remaining $n-\#I-1$ elements from $Y_{k-1}$. This validates the given
formula.
\end{proof}
\begin{example}Consider $X=\{x_1,x_2,x_3\}$, where
$x_1=\binom{1}{0}$ and $x_2=x_3=\binom{0}{1}$. Then, we have
$X(\emptyset)=\{x_1,x_2,x_3\}$, $X(\{x_1\})=\{x_2,x_3\}$,
$X(\{x_2\})=\{x_1\}$, $X(\{x_3\})=\{x_1,x_2\}$,
$X(\{x_1,x_3\})=\{x_2\}$ and $X(\{x_1,x_2\})=\emptyset$. Assume
$\sk(\emptyset)=\sk(\{x_1\})=1$ and $\sk(\{x_2\})=\sk(\{x_3\})=2$.
Then for $j=4$, the independent sets in the sum (\ref{hilform}) are
$\emptyset$ and
$\{x_3\}$, and we have $h_{\sk}(4)=3$. For $j<4$, one finds out that
$h_\sk(j)=j+1$, hence that $\Pi_3\subset \calP_\sk$. Note that
$\dim\calP_\sk=\#\B_\sk=13$ here.
\end{example}

\section{Construction and analysis of $\mathcal{D}_\strangek$}

The main objective in this section is to show that the space
$\calD_\strangek$ is coherent, whenever $\strangek$ is solid.
Thus, the main result in this section is the following:
\begin{theorem}\label{dimd}
$\B_k$ is coherent for all solid assignments $\sk$:
$$\dim\mathcal{D}_{\strangek}=\# \bprime.$$
\end{theorem}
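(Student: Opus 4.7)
The plan is to establish $\calD_\sk=\Pi(V_\sk)$, from which the theorem follows at once: by Theorem~\ref{leastthm} together with the genericity of $(\lambda_z)_{z\in X\cup Y}$ (which makes $\b$ injective on $\B(X\cup Y)\supseteq\B_\sk$), one has $\dim\Pi(V_\sk)=\#V_\sk=\#\B_\sk$.

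The containment $\Pi(V_\sk)\subseteq\calD_\sk$ is a direct adaptation of the \cite{BR91} argument for the central case. For every $Z\subseteq X\cup Y$ meeting each $B\in\B_\sk$, the inhomogeneous product $q_Z$ vanishes on $V_\sk$: at any $\b(B)$ the product contains a factor $q_x$ with $x\in B\cap Z$, which is zero there. Consequently, for every $g=\sum_v c_v e_v\in\Exp(V_\sk)$,
$$q_Z(D)\,g(0)=\sum_v c_v\,q_Z(v)=0.$$
Expanding $q_Z=p_Z+(\text{lower-degree terms})$ and $g=g\least+(\text{higher-degree terms})$ and using the degree-homogeneity of the apolar pairing, the identity above collapses to $p_Z(D)(g\least)(0)=0$. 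Iterating on the shifted generators $t^\alpha p_Z$ of $\calJ_\sk$, one concludes $g\least\in\calD_\sk$, whence $\Pi(V_\sk)\subseteq\calD_\sk$ and $\dim\calD_\sk\ge\#\B_\sk$ (also a consequence of Theorem~\ref{br91thm}).

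For the reverse containment, the plan is to exploit the inclusion $\calJ(X\cup Y)\subseteq\calJ_\sk$ (valid because $\B_\sk\subseteq\B(X\cup Y)$, so any $Z$ hitting every basis of $X\cup Y$ certainly hits every $B\in\B_\sk$) together with the central decomposition $\Pi=\calP(X\cup Y)\oplus\calJ(X\cup Y)$. Since $\calJ_\sk$ is a homogeneous ideal, apolar duality gives $\dim\calD_\sk=\dim\Pi/\calJ_\sk$, and the stages of quotienting yield
$$\dim\Pi/\calJ_\sk\;=\;\#\B(X\cup Y)\;-\;\dim\bigl(\calJ_\sk\cap\calP(X\cup Y)\bigr).$$
So the desired upper bound $\dim\calD_\sk\le\#\B_\sk$ reduces to exhibiting $\#\B(X\cup Y)-\#\B_\sk$ linearly independent elements in $\calJ_\sk\cap\calP(X\cup Y)$. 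A natural supply comes from $\{p_{(X\cup Y)\setminus B}:B\in\B(X\cup Y)\setminus\B_\sk\}$: each lies in $\calP(X\cup Y)$ because its complement is a basis, and each lies in $\calJ_\sk$ because for every $B'\in\B_\sk$ one has $B'\cap((X\cup Y)\setminus B)=B'\setminus B\neq\emptyset$.

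The main obstacle is proving that these polynomials---possibly augmented by lower-degree elements of $\calJ_\sk\cap\calP(X\cup Y)$ arising from the expansion $q_Z=p_Z+(\text{lower})$---actually span a subspace of the predicted dimension. Their linear independence is nontrivial, since the full family $\{p_{(X\cup Y)\setminus B}:B\in\B(X\cup Y)\}$ is generally linearly dependent. The solidness of $\sk$ is exactly the combinatorial hypothesis that makes the count come out right: the forbidden bases $\B(X\cup Y)\setminus\B_\sk$ are precisely those whose $Y$-component exceeds the window $Y_{m(B\cap X)}$, and the monotonicity $\Span Z\subseteq\Span Z'\Rightarrow\sk(Z)\le\sk(Z')$ aligns the removal of each such basis with a matching loss of a homogeneous degree of freedom in $\calP(X\cup Y)/\calJ_\sk$. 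I expect this to be verified by an induction over $I\in\I(X)$ in an order respecting the solidness-induced partial order on flats, pairing each excluded basis $B\in\B(X\cup Y)\setminus\B_\sk$ with a specific witness element of $\calJ_\sk\cap\calP(X\cup Y)$.
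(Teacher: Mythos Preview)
Your lower bound is fine and matches the paper (either via Theorem~\ref{br91thm} or via $\Pi(V_\sk)\subset\calD_\sk$). The gap is in the upper bound. You correctly observe that the full family $\{p_{(X\cup Y)\bks B}:B\in\B(X\cup Y)\}$ is linearly dependent; in fact it is \emph{massively} so, since every such polynomial has the same degree $\#(X\cup Y)-n$ and they all lie in the single top homogeneous piece of $\calP(X\cup Y)$, whose dimension is the number of bases of external activity~$0$---typically far smaller than $\#\B(X\cup Y)$. So the subfamily indexed by $\B(X\cup Y)\bks\B_\sk$ has no reason to be independent, and ``possibly augmented by lower-degree elements'' plus ``I expect this to be verified by an induction'' is where the actual content of the theorem would have to live. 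You have not supplied that content, and there is no evident mechanism by which solidness alone produces $\#\B(X\cup Y)-\#\B_\sk$ independent elements of $\calJ_\sk\cap\calP(X\cup Y)$ out of this particular family.

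The paper takes a completely different route for the upper bound: it invokes the deletion/restriction machinery of \cite{BRS96}. One says $x$ is \emph{placable} in $\B'\subset\B(X\cup Y)$ if every $B\in\B'$ can be modified by swapping in $x$ to yield another element of $\B'$; and $\B'$ is \emph{placible} if it can be recursively split by placable elements down to singletons. The cited lemma gives $\dim\calD_{\B'}\le\#\B'$ for any placible $\B'$. The paper then proves (Proposition~\ref{placable}) that for solid $\sk$, every $x\in X\bks(A\cup C)$ is placable in $\bprime_{,A,C}:=\{B\in\B_\sk: C\subset B,\ B\cap A=\emptyset\}$. Solidness is used exactly here: when $x\notin\Span(B\cap X)$, one swaps out the last $y\in B\cap Y$ and needs $m(I\cup\{x\})\ge m(I)-1$, which follows from $\sk(I)\le\sk(I\cup\{x\})$; when $x\in\Span(B\cap X)$, one swaps within $X$ and needs $\sk$ constant on sets with the same span. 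Iterating the split over $X$ and then over $Y$ reduces $\B_\sk$ to singletons, proving placibility and hence the upper bound. The identity $\Pi(V_\sk)=\calD_\sk$ is then recorded as a corollary, not as the engine of the proof.
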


 Recall that the
lower bound $\dim\mathcal{D}_{\strangek}\ge \# \bprime$ is valid,
Theorem \ref{br91thm}, without any conditions or assumptions on
$\B_\sk$. The solid assumption on $\sk$, thus, leads to a matching
 upper
bound. In proving this matching bound, we will invoke the notion of {\it
placability}:
\begin{definition}Let $X$ be a matroid and $\emptyset \neq
\mathbb{B}'\subset\mathbb{B}(X)$.
\begin{enumerate}
\item Given $x\in X$, the actions of {\bf deletion} of $x$ and {\bf
restriction} to $x$ decompose $\B'$ into
$$
\mathbb{B}'_{\backslash x}:=\{B\in \B'\mid x\not\in B\},\quad
\hbox{and}\quad \mathbb{B}'_{/x}
:=\{B\in \B'\mid x\in B\}.$$
\item An element $x\in X$ is {\bf placable} in
$\mathbb{B}'$ if for each $B\in \mathbb{B}'$, there exists an
element $a\in B$ such that $ (\{x\}\cup B)\bks \{a\}\in \mathbb{B}'$.
\item  A (placable) {\bf split} of $\mathbb{B}'$ is a set
partition $\mathbb{B}'_{/x}\sqcup \mathbb{B}'_{\backslash x}$ by a placable
element $x$ such that both $\mathbb{B}'_{/x}, \mathbb{B}'_{\backslash x}\neq
\emptyset$ .
\item  We say that $\mathbb{B}'$ is {\bf placible}
if one of the following two conditions holds:
\begin{enumerate}
\item $\mathbb{B}'$ is a singleton.
\item There exists $x\in X$ which is placable in $\mathbb{B}'$, for which
$\mathbb{B}'_{/x}$ and $\mathbb{B}'_{\backslash x}$ are, each, non-empty
and placible.
\end{enumerate}
\end{enumerate}
\end{definition}

Note that Part 4 of the above definition is inductive; this inductive
definition is valid, since we assume
both $\mathbb{B}'_{/x}$ and $\mathbb{B}'_{\backslash x}$ to be
nonempty.

\smallskip
The following is known:
\begin{lemma}[\cite{BRS96}]\label{lemplace} Let
$\mathbb{B}'\subset\mathbb{B}(X)$.  If $\mathbb{B}'$ is placible, then
$\dim\mathcal{D}_{\mathbb{B}'}\le\#\mathbb{B}'$.
\end{lemma}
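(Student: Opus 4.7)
The plan is to proceed by induction on $\#\B'$, exploiting the recursive structure of placibility. In the base case $\B'=\{B\}$, the ideal $\calJ_{\{B\}}$ contains each linear form $p_z$, $z\in B$, and these forms span $\Pi_1^0$ since $B$ is a basis of $\R^n$. Hence $\calJ_{\{B\}}$ equals the maximal ideal of polynomials vanishing at $0$, forcing $\calD_{\{B\}}=\C$ and $\dim\calD_{\{B\}}=1$. For the inductive step, let $x$ be a placable element effecting a split $\B'=\B'_{/x}\sqcup\B'_{\backslash x}$, with both pieces nonempty and placible. I will analyze the linear map
$$\phi:=p_x(D)\colon \calD_{\B'}\to\Pi,$$
bound $\dim\ker\phi$ and $\dim\operatorname{Im}\phi$ separately, and combine via rank--nullity together with the inductive hypothesis.

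For the image, suppose $g=p_x(D)f$ with $f\in\calD_{\B'}$, and let $W$ be any (multi)subset of $X$ hitting every basis in $\B'_{\backslash x}$. Then $W\cup\{x\}$ hits every basis in $\B'_{/x}$ trivially (as $x$ belongs to it) and every basis in $\B'_{\backslash x}$ by hypothesis, hence $p_{W\cup\{x\}}\in\calJ_{\B'}$. Therefore $p_W(D)g=p_{W\cup\{x\}}(D)f=0$, showing $\operatorname{Im}\phi\subset\calD_{\B'_{\backslash x}}$.

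The containment $\ker\phi\subset\calD_{\B'_{/x}}$ is the crux of the argument and the principal obstacle, as it is the only step actually consuming placibility. Fix $f\in\ker\phi$ and $Z\subset X$ hitting every basis in $\B'_{/x}$. The case $x\in Z$ is immediate from $p_Z(D)f=p_{Z\setminus\{x\}}(D)(p_x(D)f)=0$. When $x\notin Z$, I invoke placability of $x$: for any $B\in\B'_{\backslash x}$ there exists $a\in B$ with $B^\ast:=(B\cup\{x\})\setminus\{a\}\in\B'$, and necessarily $B^\ast\in\B'_{/x}$. Since $Z$ hits $B^\ast$ while avoiding $x$, $Z$ must meet $B^\ast\setminus\{x\}=B\setminus\{a\}\subset B$; thus $Z$ hits every basis in $\B'$, so $p_Z\in\calJ_{\B'}$ and $p_Z(D)f=0$.

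Combining the two containments via rank--nullity gives $\dim\calD_{\B'}\le\dim\calD_{\B'_{/x}}+\dim\calD_{\B'_{\backslash x}}\le\#\B'_{/x}+\#\B'_{\backslash x}=\#\B'$, the second inequality being the inductive hypothesis applied to the (strictly smaller) placible subcollections. The essential subtlety throughout is the interplay between the combinatorial hitting condition defining $\calJ_{\B'}$ and the matroid basis-exchange encoded in placability; once the case $x\notin Z$ in the kernel analysis is resolved the induction closes smoothly.
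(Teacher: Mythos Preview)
Your proof is correct. The paper does not actually prove this lemma: it quotes the result from \cite{BRS96} and uses it as a black box, so there is no in-paper argument to compare against. Your induction on $\#\B'$, using $p_x(D)$ to split $\calD_{\B'}$ into a kernel contained in $\calD_{\B'_{/x}}$ and an image contained in $\calD_{\B'_{\backslash x}}$, is exactly the intended mechanism behind placibility and is essentially the argument of \cite{BRS96}. The one step that genuinely consumes placability---showing that a $Z$ with $x\notin Z$ which hits every basis of $\B'_{/x}$ must already hit every basis of $\B'$---is handled correctly via the exchange $B\mapsto(\{x\}\cup B)\setminus\{a\}$.
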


Thus, in view of the above lemma, the inequality
$\dim\mathcal{D}_{\strangek}\le \# \bprime$ will follow once we show
that $\bprime$ is placible, as we do now.

First, recall from (\ref{B_k}) that
$$\B_{\strangek}:=\{B\in \B(X\cup Y)\mid\ B\cap Y\subset Y_{m(B\cap X)}\}.$$
Given two disjoint subsets, $A,C$, of $X$, we denote
$$\bprime_{,A,C}:=\{B\in \B_{\strangek}\mid B\cap A=
\emptyset, C\subset B\}.$$ Notice that it is possible that another
pair $A',C'$ defines the same set: $\bprime_{,A',C'}=\bprime_{,A,C}$.
Assume in the following proposition that $A$ and $C$ are maximal. It
then follows, since $C\subset B$, for each $B\in\bprime_{,A,C}$,
that $\Span C\subset A\cup C$.

\begin{proposition}\label{placable}Assume that $\sk$ is solid.
Then each element $x\in X\backslash(A\cup C)$ is placable in
$\bprime_{,A,C}$.
\end{proposition}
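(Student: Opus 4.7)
The plan is to verify placability by producing, for each $B\in\bprime_{,A,C}$ with $x\notin B$, an exchange partner $a\in B$ making $B_a:=(B\setminus\{a\})\cup\{x\}\in\bprime_{,A,C}$ (the case $x\in B$ being immediate). The natural candidates come from the fundamental circuit $\mathrm{circ}(B,x)\subset B\cup\{x\}$ of $x$ relative to $B$: matroid exchange guarantees that $B_a$ is a basis of $X\cup Y$ for every $a\in\mathrm{circ}(B,x)\setminus\{x\}$, and then the conditions $B_a\cap A=\emptyset$ and $C\subset B_a$ reduce to $a\notin A\cup C$; since $B\cap A=\emptyset$ forces $a\notin A$ automatically, only $a\notin C$ needs to be arranged. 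Here the maximality of $A$ enters: it supplies a $B^{*}\in\bprime_{,A,C}$ containing $x$, whence $C\cup\{x\}\subset B^{*}$ is independent, and the minimality of $\mathrm{circ}(B,x)$ as a dependent set precludes $\mathrm{circ}(B,x)\subset C\cup\{x\}$. Therefore $\mathrm{circ}(B,x)\setminus(C\cup\{x\})$ is nonempty.

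The remaining, more delicate, requirement is the ordering condition $B_a\cap Y\subset Y_{m(B_a\cap X)}$. Writing $I:=B\cap X$ and $J:=B\cap Y$ (so $J\subset Y_{m(I)}$), I split according to whether $\mathrm{circ}(B,x)\subset X$, i.e.\ whether $I\cup\{x\}$ is dependent. In the dependent case every element of $\mathrm{circ}(B,x)\setminus\{x\}$ lies in $I$; picking any $a\in\mathrm{circ}(B,x)\setminus(C\cup\{x\})$ and unwinding the circuit equation (both $x\in\Span I$ and $a\in\Span((I\setminus\{a\})\cup\{x\})$ fall out directly) yields $\Span((I\setminus\{a\})\cup\{x\})=\Span I$. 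The solid assumption on $\sk$ then gives $m(B_a\cap X)=m(I)$, and the ordering condition on $J=B_a\cap Y$ is preserved.

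In the independent case $\mathrm{circ}(B,x)\cap Y\neq\emptyset$, and I pick $a:=y_k$ with $k:=\max\{j:y_j\in\mathrm{circ}(B,x)\}$; since $a\in Y$, automatically $a\notin A\cup C$. The crucial lemma is that if $y_{m(I)}\in J$ then $y_{m(I)}\in\mathrm{circ}(B,x)$. Otherwise $x\in\Span(B\setminus\{y_{m(I)}\})$, so $(I\cup\{x\})\cup(J\setminus\{y_{m(I)}\})$ is an $n$-element linearly dependent set; in any nontrivial dependence the general-position hypothesis on $Y$ forces the coefficient of every $w\in J\setminus\{y_{m(I)}\}$ to vanish (else $w$ would lie in the span of only $n-1$ vectors of $(X\cup Y)\setminus\{w\}$), leaving a nontrivial dependence inside $I\cup\{x\}$ and contradicting the case hypothesis. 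Granted the lemma, $y_{m(I)}\in J$ forces $k=m(I)$ (combining $y_{m(I)}\in\mathrm{circ}(B,x)$ with $y_k\in J\subset Y_{m(I)}$), while $y_{m(I)}\notin J$ gives $J\subset Y_{m(I)-1}$ outright; in either subcase $B_a\cap Y=J\setminus\{y_k\}\subset Y_{m(I)-1}$. Solidity applied to $I\subset I\cup\{x\}$ yields $m(I\cup\{x\})\ge m(I)-1$, so $B_a\in\B_\sk$ as needed.

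The main obstacle is precisely this general-position step in the independent case: extracting a matroidal contradiction from the failure of $y_{m(I)}\in\mathrm{circ}(B,x)$ requires separating the $X$- and $Y$-coefficients of the dependence and invoking the hypothesis on each active $Y$-element individually. The dependent case and the maximality argument are otherwise routine matroid-exchange bookkeeping, invoking solidity exactly once in each.
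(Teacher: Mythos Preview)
Your proof is correct and shares the paper's two-case split on whether $x\in\Span I$. In the dependent case your circuit argument is essentially the paper's (both extract $x\notin\Span C$ from the maximality of $A$ and then exchange within $I\setminus C$, invoking solidity to get $m(I')=m(I)$). In the independent case the paper takes a shorter path: rather than working inside the fundamental circuit, it simply removes the \emph{last} element $y$ of $J$, so that $J\setminus\{y\}\subset Y_{m(I)-1}$ is immediate, and invokes general position only to verify that $(I\cup\{x\})\cup(J\setminus\{y\})$ is a basis. Your lemma is really proving the same fact from the other side --- indeed the general-position argument you give shows that \emph{every} element of $J$ lies in $\mathrm{circ}(B,x)$, so your $y_k$ necessarily coincides with the paper's last element of $J$ --- but you arrive there by a detour through the subcases $y_{m(I)}\in J$ versus $y_{m(I)}\notin J$. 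Your framing has the virtue of keeping every swap a bona fide circuit exchange, so that ``$B_a$ is a basis'' never needs separate verification; the paper's buys brevity by letting general position certify the basis property in one stroke.
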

\begin{proof}Let $x\in X\backslash(A\cup C)$ and $B\in\bprime_{,A,C}$.
We need to show that we can replace some element of $B$ by $x$ to
obtain another basis in $\B_\sk$.  This is trivial if $x\in B$. So
we assume that $B$ contains $C$ and is disjoint of $A$, $x$, and (due
to the maximality of $A$) $\Span C\bks C$. Denote
$I:=B\cap X$ and $J:=B\cap Y$. There are two cases to consider:
\begin{enumerate}
\item  $x\notin\Span (I)$. In this case, we
replace the last element $y$ of $J$
with $x$. We claim that
$$B':=(I\cup\{x\})\cup (J\backslash \{y\}):=I'\cup J'\in
\bprime_{, A,C}.$$ First, it is clear that $I'\cap A=\emptyset$, since
$I\cap A=\emptyset$, and $x\not\in A$.
Also, $C\subset I'$, since $C\subset I$.
Therefore, we only need to show that $J'\subset Y_{m(I')}$. We know,
by assumption, that $J\subset Y_{m(I)}$. Since $y$ is the {\it last}
element of $J$, we conclude that $J' \subset Y_{m(I)-1}.$ However,
$$m(I)-1=\sk (I)+\co I-1=\sk (I)+\co I'\le
\sk (I')+\co I'=m(I'),$$
with the inequality following from the solid property of $\sk$. Consequently,
$J'\subset Y_{m(I')}$ as required.

\item  $x\in\Span  I$. Since we assume that $x\notin A$, and $A$ is maximal, we have
$x\notin \Span C$. So there exists $a\in I\bks C$ such that, with
$I':=\{x\}\cup I\bks\{a\}$, $\Span I=\Span I'$.
We now claim that $$B':=I'
\cup J\in \bprime.$$ Here all the requisite conditions are immediate.
First,
$I'\cap A=\emptyset$, since $I\cap A=\emptyset$, and $x\notin A$.
Second, $C\subset I'$ since $a\notin C$ and $C\subset I$.  Finally,
since $\Span I=\Span I'$, and since $\sk$ is solid, we must have
$\sk (I')=\sk (I)$. Since also  $\# I'=\# I$, we conclude that
$m(I)=m(I')$. Therefore, the required inclusion $J\subset Y_{m(I')}$ follows
from the assumed inclusion
$J\subset Y_{m(I)}.$
\end{enumerate}
\end{proof}

Now, we can build a binary tree whose root is $\B_\sk$, and with
each branching of a node done by deletion/restriction using some
element $x\in X$. Obviously, every node in such tree is of the form
$\bprime_{,A,C}$. Let us assume that the branching of the node
$\bprime_{,A,C}$ is done by an element $x\in X\bks (A\cup C)$. Such
element was just proved to be placable in $\bprime_{,A,C}$. The maximality
assumption on $A,C$ easily leads to the conclusion that
the split is non-trivial.
We can continue branching the nodes of the tree as much
as it is possible.
Obviously, we will
have to stop only when $X\subset A\cup C$, i.e., $X\bks A\subset C$.
Since we assume $\bprime_{,A,C}\not=\emptyset$, it must be the case
that
 $C=I$ is some independent set and $A=X\backslash I$. So this
node corresponds to the set $\ex(I)$, i.e., bases in $\B_\sk$ which
extend $I$ using elements of $Y_{m(I)}$. If $I\in \B(X)$, we are
done since the node is a singleton, and the same applies if
$\sk(I)=0$. Otherwise, every $y\in Y_{m(I)}$ is placable in every
subset of $\ex(I)$, as one easily verifies.  Thus, we can split
$\ex(I)$ successively using elements of $Y_{m(I)}$ until $\ex(I)$ is
completely split to singletons.

Thus, we have shown that $\B_\sk$ is  placible. {\it Consequently, we can invoke
Lemma \ref{lemplace} to obtain Theorem \ref{dimd}.}

%

\medskip
Next, we return our attention to the inhomogeneous polynomials
$q_z$, $z\in X\cup Y$, the associated hyperplane arrangement
$\calH(X\cup Y)$, and the bijection $\b$ from $\B(X\cup Y)$ onto
the vertex set $V_\sk$ of $\calH(X\cup Y)$ (cf.\ the discussion around
(\ref{defqx})).
Let $\Pi(V_\sk)$ be the least space of $V_\sk$ (cf.\ Theorem
\ref{leastthm}). Now,
$$\dim\Pi(V_\sk)=\#V_\sk=\#\B_\sk=\dim\calD_\sk,$$
with the last equality implied by Theorem  \ref{dimd}.
However, \cite{BR90}, we (always, i.e., even in the absence of the
solid property of $\sk$) have that
$$\Pi(V_\sk)\subset \calD_\sk.$$
Therefore:

\begin{corollary}$\Pi(V_{\strangek})=\mathcal{D}_{\strangek},$ where
$V_{\strangek}=\b(B_{\sk})$.
\end{corollary}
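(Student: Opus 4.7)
The plan is to prove the corollary as a direct dimension-count consequence of the results already assembled. All three of the relevant numerical invariants — $\dim\Pi(V_\sk)$, $\#V_\sk$, $\#\B_\sk$, and $\dim\calD_\sk$ — have just been shown to agree, so the corollary will follow from a single inclusion together with matching dimensions.

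First I would record the chain of equalities on dimensions. Since $\b:\B(X\cup Y)\to\R^n$ is injective (a consequence of the generic choice of the constants $\lam_z$), its restriction to $\B_\sk$ is injective as well, so $\#V_\sk=\#\B_\sk$. Applying Theorem \ref{leastthm} to the finite set $V_\sk$ yields $\dim\Pi(V_\sk)=\#V_\sk=\#\B_\sk$. By the coherence of $\B_\sk$ proved in Theorem \ref{dimd}, we also have $\dim\calD_\sk=\#\B_\sk$. Hence $\dim\Pi(V_\sk)=\dim\calD_\sk$.

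Next I would invoke the inclusion $\Pi(V_\sk)\subset\calD_\sk$, which holds unconditionally (i.e., without any solidity hypothesis on $\sk$) by the general result of \cite{BR90} used repeatedly in the preceding discussion. Concretely, every $q_z$ for $z\in X\cup Y$ vanishes on all vertices of $\calH(X\cup Y)$ lying on the hyperplane $H_z$, and any product $q_Z$ with $Z\cap B\not=\emptyset$ for all $B\in\B_\sk$ then vanishes on $V_\sk$; applying the least map translates this vanishing into annihilation of $\Pi(V_\sk)$ by $p_Z(D)$, which is precisely the defining condition of $\calD_\sk$.

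Combining the inclusion with the equality of dimensions forces $\Pi(V_\sk)=\calD_\sk$, which is exactly the statement of the corollary. There is no real obstacle here: the entire weight of the argument rests on Theorem \ref{dimd}, whose placability proof has already been carried out, so the corollary is essentially a one-line consequence once that theorem is in hand.
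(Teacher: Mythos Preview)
Your proposal is correct and follows essentially the same approach as the paper: the paper also records the chain $\dim\Pi(V_\sk)=\#V_\sk=\#\B_\sk=\dim\calD_\sk$ (the last equality by Theorem~\ref{dimd}), invokes the unconditional inclusion $\Pi(V_\sk)\subset\calD_\sk$ from \cite{BR90}, and concludes equality. Your write-up is slightly more detailed in justifying the inclusion, but the argument is the same.
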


\begin{example}[continuation of Example \ref{eg1}]\label{eg2}
Let $X=\{x_1,x_2\}\subset \mathbb{R}^2$, where $x_1=\binom{1}{0}$
and $x_2=\binom{0}{1}$.
Choose $\strangek(\emptyset)=1$, $\strangek(x_1)=\strangek(x_2)=1$, and
$\strangek(\{x_1,x_2\})=2$. It is trivial to check that
this $\sk$ is solid. We
want to find $\B_{\sk}$ and $V_{\strangek}=\b(B_{\strangek})$ in this
example.

Recall that $m(I)=\strangek(I)+2-\#I$. So $m(\emptyset)=3$ and
$m(x_1)=m(x_2)=m(\{x_1,x_2\})=2$. Therefore it suffices for $Y$ to
have 3 elements: $Y=\{y_1,y_2,y_3\}$. By the definition of $\B_{\sk}$
in (\ref{B_k}), we have
$$\B_{\sk}=\ex(\emptyset)\cup\ex(\{x_1\})\cup\ex(\{x_2\})\cup\ex(\{x_1,x_2\}),$$
with
\begin{align*}
&\ex(\emptyset)=\{\{y_1,y_2\},\{y_1,y_3\},\{y_2,y_3\}\},\\
&\ex(\{x_i\})=\{\{x_i,y_1\},\{x_i,y_2\}\},\,i=1,2,\\
&\ex(\{x_1,x_2\})=\{\{x_1,x_2\}\}.
\end{align*}%
In particular, $\#\B_{\sk}=8$ which matches (\ref{bk}) in Example
\ref{eg1}.

 The associated hyperplane arrangement $\calH(X\cup Y)$ is
depicted in the following figure and
$V_{\strangek}=\b(B_{\strangek})$ corresponds to the vertices of the
arrangement that are marked solid
(viz.\ all vertices but the intersections of
$\{x_1,y_3\}$ and $\{x_2,y_3\}$).
\begin{center} $ \xy 0;/r.2pc/: (20,-10)*{}="G"; (-30,-10)*{}="H";
(-15,20)*{}="L"; (-15,-20)*{}="M"; (-15,25)*{x_1}; (25,-10)*{x_2};
 "G"; "H"**\dir{-};
 "L"; "M"**\dir{-};
 (-20,-20)*{}="y3a"; (20,20)*{}="y3b";(25,25)*{y_2};
 "y3a"; "y3b"**\dir{--};
 (20,3)*{}="y2a"; (-30,-12)*{}="y2b";(25,3)*{y_3};
 "y2a"; "y2b"**\dir{--};
 (20,-20)*{}="y1a"; (-20,20)*{}="y1b";(25,-20)*{y_1};
 "y1a"; "y1b"**\dir{--};
 (0,0)*{\bullet};(-15,15)*{\bullet};(-15,-15)*{\bullet};
 (-10,-10)*{\bullet};(10,-10)*{\bullet};(-15,-10)*{\bullet};
 (-4.29,-4.29)*{\bullet},(2.31,-2.31)*{\bullet};
 (-23.33,-10)*{\circ};(-15,-7.5)*{\circ};
\endxy
$
\end{center}
\end{example}

\section{Construction and analysis of $\mathcal{P}_\sk$}
Recall from the introduction the definition of the polynomials spaces $\Pi$
and $\Pi_k$, $k\ge 0$, and the definition of  $\calP_\sk$:
\begin{equation}\label{P_k}
\mathcal{P}_{\sk} :=\mathcal{P}_{\sk}(X) :=\sum_{Z\subset
X}p_{X{\bks} Z}\Pi_{\sk(Z)}. \end{equation} One of our primary aims
is to establish, under some conditions on $\sk$, a duality between
$\calD_\sk$ and $\calP_\sk$. Thus, we need to have
$$\dim\calP_\sk=\dim\calD_\sk=\#\B_\sk,$$
with the left equality necessary for the duality and the right one our
requirement of coherence.

\begin{example}[Continuation of Example \ref{eg1}]\label{eg3}
Let $X=\{x_1,x_2\}\subset \mathbb{R}^2$, where $x_1=\binom{1}{0}$
and $x_2=\binom{0}{1}$. Let $\strangek(x_1)=\strangek(x_2)=k$,
$\strangek(\emptyset)=j$ with $j\le k$ and
$\strangek(\{x_1,x_2\})=\ell$ with $\ell\ge k$. One can check that
$\strangek$ is solid. As in Example \ref{eg1}, we have
$$\#\B_\sk={j+2\choose 2}+2{k+1\choose 1}+1.$$
In this example, we will compute $\calP_\sk$ explicitly, and compare
its dimension with $\#\B_{\sk}$.

By (\ref{P_k}), we have
$\mathcal{P}_{\strangek}=\Span\{\Pi_kp_{x_1},\Pi_kp_{x_2},\Pi_{\ell},\Pi_jp_
X\}$. There are three cases:
\begin{enumerate}
\item If $\ell>k+1$, we have $\mathcal{P}_{\strangek}=\Pi_{\ell}$
hence $\dim\mathcal{P}_{\strangek}=\binom{\ell+2}{2}$; since we assume
$j\le k< \ell-1$, it is easy to see that we get here
$\dim\calP_\sk>\#\B_\sk$.
\item If $\ell\le k+1$ and $j\le k-1$, we have
$\mathcal{P}_{\strangek}=\Pi_{k+1}$, 
hence $\dim\mathcal{P}_{\strangek}=\binom{k+3}{2}=\binom{k+1}{2}+2
\binom{k+1}{1}+1$; consequently, $\dim\calP_\sk\ge \#\B_\sk$ with
equality  if and only if $j=k-1$.
\item If $\ell\le k+1$ but $j=k$, we have
$$\mathcal{P}_{\strangek}=\Pi_{k+1}+\Span\{p_{x_1}^{m+1}p_{x_2}^{k-m+1}\mid
m=0,\dots,k\}.$$ Therefore,
$$\dim\mathcal{P}_{\strangek}=\binom{k+3}{2}+k+1=\#\B_\sk.$$
\end{enumerate}
Note that the inequality $\dim\calP_\sk\ge \#\B_\sk$ is valid in
each of the above three cases. Our results
in this section make clear that this is not an accident, and is due to the
fact that $\sk$ is solid. At the same time, this example clearly shows that
the solid assumption alone does guarantee our desired {\rm equality}.
To this end, we will revisit the case here in Example \ref{eg4},
and will study closely the situations when equality holds.
\begin{proof}[]
\end{proof}
\end{example}

\medskip
As we just said,
the lower bound on $\dim\calP_\sk$ that was observed in the example
above is true, in general, for every solid assignment $\sk$:
\begin{theorem}\label{pbine} Assume $\sk$ to be solid. Then:
$$\dim \mathcal{P}_{\strangek}\ge \dim \mathcal{D}_{\strangek}=\#\B_\sk.$$
\end{theorem}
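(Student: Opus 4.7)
The plan is to exhibit a Lagrange basis for $V_\sk$ inside $\calP_\sk$; this produces $\#\B_\sk$ linearly independent elements of $\calP_\sk$ which, together with Theorem \ref{dimd}, yields the claimed chain $\dim\calP_\sk\ge \dim\calD_\sk=\#\B_\sk$. For each $B\in \B_\sk$, write $I:=B\cap X$ and $J:=B\cap Y$ (so $J\subset Y_{m(I)}$ by definition of $\B_\sk$), and set
$$L_B\ :=\ q_{(X\cup Y_{m(I)})\bks B}\ =\ q_{X\bks I}\cdot q_{Y_{m(I)}\bks J}.$$

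Verifying that $L_B\in \calP_\sk$ is the one nontrivial step, and the step where the solid hypothesis is essential. Fully distributing the product $\prod_{x\in X\bks I}(p_x-\lam_x)\prod_{y\in Y_{m(I)}\bks J}(p_y-\lam_y)$ writes $L_B$ as a sum of terms $c\,p_{X\bks Z}\cdot g$, where $Z:=I\cup Z_1$ for some $Z_1\subset X\bks I$, and $g$ is a product of $p_y$'s of degree $\sk(I)-|Z_2|\le \sk(I)$ for some $Z_2\subset Y_{m(I)}\bks J$ (using the identity $|Y_{m(I)}|-|J|=\sk(I)$). Since $\Span I\subset \Span Z$, solidity yields $\sk(I)\le \sk(Z)$, so $g\in\Pi_{\sk(Z)}$ and the term lies in the summand $p_{X\bks Z}\Pi_{\sk(Z)}$ of $\calP_\sk$.

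The Lagrange property (namely $L_B(\b(B))\ne 0$ while $L_B(\b(B'))=0$ for every $B'\in\B_\sk\bks\{B\}$) is then a formal consequence of the injectivity of $\b$. Non-vanishing at $\b(B)$ follows from genericity of $(\lam_z)_{z\in X\cup Y}$, which ensures $q_z(\b(B))\ne 0$ for every $z\notin B$. For vanishing at $\b(B')$ with $B'=I'\cup J'$, I would locate a witness $z\in B'\cap \bigl((X\cup Y_{m(I)})\bks B\bigr)$ by a case split on $I'$: if $I'\ne I$, any $z\in I'\bks I\subset X$ works; if $I'=I$, then $J'\ne J$ and both lie in $Y_{m(I)}$ with common cardinality $n-|I|$, so any $z\in J'\bks J$ works. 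Linear independence of $\{L_B\}_{B\in\B_\sk}$ then drops out by evaluating an arbitrary dependence at the points $\b(B)$.

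The main obstacle is the membership step: the degree bookkeeping is tight, and without solidity the expansion of $L_B$ in general produces terms escaping the allowed summands $p_{X\bks Z}\Pi_{\sk(Z)}$. Everything else is a clean transcription of the classical Lagrange-basis argument to the enlarged matroid $X\cup Y$ with its $\sk$-adapted basis selection $\B_\sk$.
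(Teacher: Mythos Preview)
Your approach is the paper's third proof (the Lagrange-basis route), but your vanishing argument has a genuine gap in the case split. When $I'\subsetneq I$ the set $I'\bks I$ is empty, so ``any $z\in I'\bks I$'' produces nothing. Worse, in this case you cannot fall back on the $Y$-part either: since $\sk$ is solid with $I'\subset I$, one has $m(I')\ge m(I)$, so the $Y$-part $J'$ of $B'$ lives in $Y_{m(I')}\supset Y_{m(I)}$ and may avoid $Y_{m(I)}\bks J$ entirely. Concretely, take $X=\{x_1,x_2\}$ the standard basis of $\R^2$ with $\sk(\emptyset)=\sk(x_1)=\sk(x_2)=1$ (solid). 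For $B=\{x_1,y_1\}$ one has $m(I)=2$ and $L_B=q_{x_2}q_{y_2}$; but $B'=\{y_1,y_3\}\in\B_\sk$ (since $m(\emptyset)=3$) satisfies $B'\cap\{x_2,y_2\}=\emptyset$, so $L_B(\b(B'))\ne 0$. Thus your $L_B=Q_B$ is \emph{not} a Lagrange polynomial in general.

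The paper's Lemma~\ref{lem4cond} isolates exactly these bad $B'$: they are the $B'\in\B_\sk$ with $I'\subsetneq I$, $\sk(I')=\sk(I)$, $B'\cap Y_{m(I)}=J$, and $J'\bks J=Y_{m(I')}\bks Y_{m(I)}$. Every such $B'$ contains $y_{m(I)+1}$, so $Q_B\cdot q_{y_{m(I)+1}}$ \emph{does} vanish on $V_\sk\bks\b(B)$; the remaining difficulty is that this product need not lie in $\calP_\sk$ (its $Y$-degree is $\sk(I)+1$, one too many). The paper's fix is to replace $q_{y_{m(I)+1}}$ by a modified linear form $\ell_B$---subtracting off the $q_x$'s for $x\in J$ and for those $x\in I$ with $\sk(I\bks x)<\sk(I)$---which simultaneously (i) still vanishes at the bad vertices and (ii) pushes the extra degree onto $q_x$-factors with $x\in X$, so that $Q_B\ell_B\in\calP_\sk$. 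Your membership argument for $Q_B$ itself is correct and is exactly Lemma~\ref{pq}; what is missing is this additional linear factor and the verification that the correction terms preserve both membership and the Lagrange property.
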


The equality $\dim\calD_\sk=\#\B_\sk$ was proved in Theorem
\ref{dimd}. We need thus to prove the inequality assertion. We
provide below three complementary proofs, each revealing a different
property of $\calP_\sk$.

\subsection{First proof of Theorem \ref{pbine}: embedding ${\cal D}_\sk'$ in
${\cal P}_\sk$} The inequality $\dim\calP_\sk\ge \dim\calD_\sk$
follows (directly) from the following stronger result (cf.\ the discussion
above Theorem \ref{thmpartial} for
the definition of the ideal $\calJ_\sk$):
\begin{proposition}\label{sum}
Assume $\sk$ to be solid. Then:
$$\mathcal{J}_{\strangek}+\mathcal{P}_{\strangek}=\Pi.$$
\end{proposition}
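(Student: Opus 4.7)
The plan is to reduce $\calJ_\sk + \calP_\sk = \Pi$ to a dimension-matched linear independence problem. Since $\calJ_\sk$ is a homogeneous ideal and equals the annihilator of $\calD_\sk$ under the Macaulay pairing $\langle f, p\rangle := p(D) f(0)$, we have $\Pi/\calJ_\sk \cong \calD_\sk^*$, so by Theorem \ref{dimd}, $\dim(\Pi/\calJ_\sk) = \#\B_\sk$. Thus $\calJ_\sk + \calP_\sk = \Pi$ is equivalent to producing $\#\B_\sk$ elements of $\calP_\sk$ whose images in $\Pi/\calJ_\sk$ are linearly independent, or equivalently, to exhibiting a family $\{\pi_B\}_{B\in\B_\sk}\subseteq\calP_\sk$ for which the pairing matrix $M_{B,B'} := \langle f_{\b(B')}, \pi_B\rangle$ is invertible, where $\{f_v\}_{v\in V_\sk}$ is the Lagrange basis of $\calD_\sk = \Pi(V_\sk)$ (obtained from the preceding Corollary together with Theorem \ref{leastthm}).

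Mimicking the classical Lagrange basis $(q_{X\setminus B})_{B\in\B(X)}$ for the central $\calP(X)$, I would set
$$\pi_B := q_{X'_I\setminus B}, \qquad X'_I := X\cup Y_{m(I)},$$
for $B = I\cup J \in \B_\sk$ with $I = B\cap X$ and $J = B\cap Y\subseteq Y_{m(I)}$. Membership $\pi_B\in\calP_\sk$ is a direct expansion: writing each $q_z = p_z - \lam_z$ presents $\pi_B$ as a linear combination of products $p_T p_S$ with $T\subseteq X\setminus I$ and $S\subseteq Y_{m(I)}\setminus J$; setting $Z := X\setminus T$, the inclusion $I\subseteq Z$ and solidity of $\sk$ give $\sk(I)\le\sk(Z)$, and together with $\#S\le m(I) - \#J = \sk(I)$ this yields $p_T p_S = p_{X\setminus Z}p_S \in p_{X\setminus Z}\Pi_{\sk(Z)}\subseteq\calP_\sk$.

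Invertibility of $M$ is the main step. Using the derivative identity $p(D) e_w = p(w) e_w$ together with the realization of $\calD_\sk$ as $\Pi(V_\sk)$, each entry $\langle f_v, \pi_B\rangle$ can be related, via a graded decomposition $f_v = \sum_k(f_v)_k$ and the identity $\pi_B(D) g(0) = \sum_w c_w \pi_B(w)$ valid for any $g = \sum c_w e_w$, to the polynomial values $\pi_B(\b(B')) = \prod_{z\in X'_I\setminus B} q_z(\b(B'))$. These values vanish exactly when $(X'_I\setminus B)\cap B'\ne\emptyset$. A solidity-based counting argument shows that the only $B'\in\B_\sk\setminus\{B\}$ for which the intersection is empty are the degenerate ones $B' = I'\cup J\cup(Y_{m(I')}\setminus Y_{m(I)})$ with $I'\subsetneq I$ and $\sk(I') = \sk(I)$, all of which satisfy $\#(B'\cap X) < \#(B\cap X)$. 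Ordering $\B_\sk$ by any linear extension of the partial order $B'\preceq B \iff \#(B'\cap X)\le \#(B\cap X)$ therefore renders $M$ upper-triangular, and generic choice of $(\lam_z)_{z\in X\cup Y}$ ensures the diagonal entries $\pi_B(\b(B)) = \prod_{z\in X'_I\setminus B}q_z(\b(B))$ are nonzero; invertibility of $M$ follows.

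The main obstacle is the degenerate off-diagonal non-vanishings in $M$: because $\pi_B$ is not a strict Lagrange polynomial for $V_\sk$, the matrix is only triangular rather than diagonal. Solidity is the hypothesis that both places $\pi_B$ inside $\calP_\sk$ and confines the off-diagonal non-vanishings to strictly lower blocks of the partial order on $\B_\sk$. A secondary, more delicate point is the passage from the pairing $\langle f_v,\pi_B\rangle$ to the naive point values $\pi_B(\b(B'))$, which requires careful bookkeeping of degree-shifted contributions from the least-map expansion of $f_v\in\Pi(V_\sk)$; ensuring that these contributions preserve the triangular structure is the technical heart of the argument.
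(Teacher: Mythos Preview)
Your overall strategy is sound up to a point: using Theorem \ref{dimd} to reduce $\calJ_\sk+\calP_\sk=\Pi$ to finding $\#\B_\sk$ elements of $\calP_\sk$ that are linearly independent modulo $\calJ_\sk$ is valid, and your verification that $\pi_B:=q_{X'_I\setminus B}\in\calP_\sk$ is correct (and coincides with the paper's argument for $Q_B\in\calP_\sk$ in \S3.3). The triangular structure of the \emph{point-value} matrix $\bigl(\pi_B(\b(B'))\bigr)_{B,B'}$ is also correct and is exactly the content of Lemma \ref{lem4cond}.

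The gap is precisely the step you yourself flag as ``the technical heart'': the passage from the Macaulay pairing $\langle f_{\b(B')},\pi_B\rangle=\pi_B(D)f_{\b(B')}(0)$ to the point values $\pi_B(\b(B'))$. These two quantities are not the same, and the triangularity of the latter does not imply triangularity (or invertibility) of the former. Concretely, your point-value argument proves that the $\pi_B$ are linearly independent modulo the \emph{vanishing ideal} $I(V_\sk)$, i.e., that $\calP_\sk$ surjects onto $\Pi/I(V_\sk)$. But $\calJ_\sk$ is a homogeneous ideal whose generators $p_Z$ do \emph{not} vanish on $V_\sk$; there is no containment between $\calJ_\sk$ and $I(V_\sk)$ in either direction, so independence mod $I(V_\sk)$ says nothing about independence mod $\calJ_\sk$. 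If one tries to repair this via the least map, one finds that for $g=\sum_v c_v e_v\in\Exp(V_\sk)$ with $g\least$ homogeneous of degree $j$, the pairing $\langle \pi_B,g\least\rangle$ picks out only the degree-$j$ component $(\pi_B)_j$ and equals $\sum_v c_v (\pi_B)_j(v)$, not $\sum_v c_v \pi_B(v)$; the triangularity you established for the full $\pi_B$ does not survive passage to a single homogeneous component. As written, then, the argument proves only $\dim\calP_\sk\ge\#\B_\sk$ (Theorem \ref{pbine}), which is exactly what the paper extracts from the $Q_B$'s in \S3.3, not the stronger Proposition \ref{sum}.

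The paper's own proof avoids $\calD_\sk$ and $V_\sk$ entirely: it sets $A:=\calJ_\sk+\calP_\sk$ and shows directly, by descending induction on $\#Y'+\#(X\setminus X')$, that every product $f\,p_{X\setminus X'}\,p_{Y'}$ with $Y'\subset Y_{n-\rank X'+\#Y'-1}$ lies in $A$; the base case $\#Y'>\sk(X')$ lands in $\calJ_\sk$ by a direct combinatorial check, and the inductive step expands $f$ over a basis $I\cup J$ to peel off one factor at a time.
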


\begin{proof}[Proof of Proposition \ref{sum}]
Set
$$A:=\calJ_\sk+\calP_\sk.$$
Let $X'\subset X$, and $Y'\subset Y_{s(X',Y')}$, with
$$s(X',Y'):=n-\rank X'+\#Y'-1.$$
(Note that the definition makes sense even when $\rank X'=n$: we have
then $s(X',Y')<\#Y'$, which merely forces $Y'$ to be empty.)
We claim that, for an
arbitrary polynomial $f$, the product
$$F:=f\,p_{X{\bks} X'}\;p_{Y'}$$
lies in $A$. Choosing $X':=X$ and $Y':=\emptyset$, we will
then obtain the desired result, since $f$ is arbitrary.

In order to prove that $F\in A$, we first fix $X'$ and assume $Y'$
to be ``large enough": $\#Y'>\sk(X')$. We claim that in this case
$F\in \calJ_\sk$, which will follow once we prove that
$$B\cap (Y'\cup X\bks X')\not =\emptyset,$$
for every $B\in \B_\sk$. To this end, we assume that $B\cap X\bks
X'=\emptyset$, and examine $J:=B\cap Y'$. Then $B \bks J\subset X'$,
and since $\sk$ is solid, $\sk(B{\bks} J)\le \sk(X')$. Also, since
$B\in \B_\sk$, $J\subset Y_{m(B\bks J)}$, where
$$m(B\bks J)=\#J+\sk(B\bks J)\le \#J+\sk(X')\le \#
J+\#Y'-1.$$ But we also have $$s(X',Y')\le \# J+\#Y'-1,$$ because
$n-\#J\le \rank X'$. We conclude that $Y'$ as well as $J$ are both
subsets of $Y_{\# J+\#Y'-1}$, implying that these two sets
intersect.

Thus, it remains to show that $F\in A$  when $\#Y'\le \sk(X')$. Note
that the number of pairs $X',Y'$ for which $\#Y'\le \sk(X')$ (and in
addition $X'\subset X$, $Y'\subset s(X',Y')$) is finite. We will
thus prove that $F\in A$ by descending induction on $\#Y'+\#(X\bks
X')$.

 Now, let $X'$ and $Y'$ be as
above. Choose a basis  $I\subset X'$ for $\Span X'$, and let
$J:=Y_{s(X',Y')+1}\bks Y'$. Then $B:=I\cup J$ is a basis for $\R^n$.
Therefore, we can write
$$f=c+\sum_{b\in B}p_b\,f_b,$$
with $c$ some scalar and $(f_b)_{b\in B}$ some polynomials.
Therefore
$$F=c\, p_{X\bks X'}\,p_{Y'}+\sum_{b\in B}p_b\, p_{X\bks X'}\, p_{Y'}\,f_b.$$
Note that $p_{X\bks X'}\,p_{Y'}\in \calP_\sk\subset A$, since
$\#Y'\le \sk(X')$, by assumption. We will use our induction
hypothesis to show that each of the summands
$$p_b\, p_{X\bks X'}\, p_{Y'}\,f_b, \quad b\in B$$
lies in $A$, too. There are two cases to consider:

1. $b\in X'$. In this case, with $X'':=X'\bks b$, we need to check
that $Y'\subset Y_{s(X'',Y')}$, and then the induction will apply.
However, $s(X'',Y')=n-\rank X''+\#Y'-1\ge n-\rank X'+\#Y'-1=s(X',Y')$.
Therefore, $Y'\subset Y_{s(X',Y')}\subset Y_{s(X'',Y')}$.

2. $b\in Y$. In this case, with $Y'':=Y\cup\{b\}$, we need to show
that $Y''\subset Y_{s(X',Y'')}$. However, we have that
$s(X',Y'')=s(X',Y')+1$, and thus
$b\in J\subset Y_{s(X',Y')+1} = Y_{s(X',Y'')}$.
Hence the induction hypothesis applies
here as well.

This completes the inductive step, hence the proof that $F\in A$,
hence the proof of Proposition \ref{sum}, hence the first proof of
Theorem \ref{pbine}.
\end{proof}

\subsection{Second proof of Theorem \ref{pbine}: homogeneous basis}

As noted in the Introduction, we can attempt to construct a
homogeneous basis for a subspace of $\calP_\sk$ by adapting the
basis construction for $\calP(X)$ from \cite{DR}.

Since in our case $\B_\sk\subset \B(X')$, $X':=X\cup Y$, we first
follow \cite{DR} and construct a homogenous basis
$$p_{X'(B)}, \quad B\in \B(X')$$
for $\calP(X')$, as in Theorem \ref{homobasis}. In the actual
construction, we need to order the vectors in $X'$: We choose any
order on $X$, retain the given order on $Y$, and insist that $x\prec
y$ for every $x\in X$ and $y\in Y$.

The polynomials
$$p_{X'(B)}, \quad B\in \B_\sk$$
are trivially linearly independent. Theorem \ref{pbine} will then
follow once we show that each one of them lies in $\calP_\sk$. So,
fix $B\in \B_\sk$. Then, with $I:=X\cap B$, $B\bks I\subset
Y_{m(I)}$. The definition of $X'(B)$ clearly shows that $X'(B)$
contain no vectors that are larger than the maximal vector in $B$.
Therefore, $X'(B)\cap Y\subset  Y_{m(I)}$. Now,
$$\#(X'(B)\cap Y)\le m(I)-\#(B\bks I)=\sk(I).$$
Since $I\subset X\bks X'(B)=:Z$ and $\sk$ is solid, we have that
$$\sk(I)\le \sk(Z),$$
and hence
$$p_{X'(B)}=p_{X'(B)\cap X}\,p_{X'(B)\cap Y}\in p_{X'(B)\cap X}\,\Pi_{\sk(I)}\subset
p_{X\bks Z}\,\Pi_{\sk(Z)}\subset \calP_\sk.$$

Thus, the linearly independent polynomials
$$p_{X'(B)},\quad B\in \B_\sk$$ lie
in $\calP_\sk$, and Theorem \ref{pbine} follows:
$$\dim \calP_\sk\ge \#\B_\sk.$$
\endproof

\subsection{Third proof of Theorem \ref{pbine}: Lagrange basis}

We retain our assumption that $\sk$ is solid, and recall the
definition of the inhomogeneous polynomials $q_x$, $x\in X\cup Y$,
together with the assignment
$$\b:\B_\sk\to\R^n$$
that assigns to each basis the common zero of the polynomials
$q_x$, $x\in B$. Also, $V_\sk:=\b(\B_\sk)$. We will show that
$\calP_\sk$ contains a Lagrange basis with respect to $V_\sk$:

\begin{proposition}\label{conlang} Assume that $\sk$ is solid and let
$V_\sk$ be as above. For every $B\in \B_\sk$, there exists
$L_B\in\calP_\sk$ such that $L_B(\b(B))\not=0$, while $L_B$ vanishes
on $V_\sk\bks \b(B)$.
\end{proposition}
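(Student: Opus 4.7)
The plan is to construct, for each $B\in\B_\sk$, an explicit Lagrange polynomial by mimicking the central construction $q_{X\bks B}$ of Theorem \ref{homobasis} and then adding correction terms by induction. Writing $I:=B\cap X$, $J:=B\cap Y$, and $X'_I:=X\cup Y_{m(I)}$, the natural candidate is
$$L_B^{(0)}\;:=\;q_{X'_I\bks B}\;=\;q_{X\bks I}\,q_{Y_{m(I)}\bks J}.$$
I would first verify $L_B^{(0)}\in\calP_\sk$ by expanding into monomials $p_S\,p_T$ with $S\subset X\bks I$ and $T\subset Y_{m(I)}\bks J$. Since $\#(Y_{m(I)}\bks J)=\sk(I)$, one has $\#T\le\sk(I)$; and setting $Z:=X\bks S\supset I$, solidity gives $\sk(Z)\ge\sk(I)\ge\#T$, so $p_S\,p_T\in p_{X\bks Z}\Pi_{\sk(Z)}\subset\calP_\sk$. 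Non-vanishing $L_B^{(0)}(\b(B))\neq 0$ follows from the generic-position hypothesis on $(\lam_z)_{z\in X\cup Y}$, which forbids $n+1$ of the $q_z$ from sharing a common zero.

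Next I would determine when $L_B^{(0)}$ vanishes at $\b(B')$ for $B'\in\B_\sk\bks\{B\}$; this amounts to $B'\cap(X'_I\bks B)\neq\emptyset$. Setting $I':=B'\cap X$ and $J':=B'\cap Y$, three cases arise. (a) If $I'\not\subset I$, any $x\in I'\bks I$ lies in $X\bks B\subset X'_I\bks B$. (b) If $I'=I$ but $B'\neq B$, then $J$ and $J'$ are distinct $(n-\#I)$-subsets of $Y_{m(I)}$, so pigeonhole supplies an element of $J'\cap(Y_{m(I)}\bks J)$. (c) If $I'\subsetneq I$, then $J'\subset Y_{m(I')}$ has cardinality strictly greater than $\#J$; however, under solidity alone $m(I')$ can exceed $m(I)$, in which case $J'\cap Y_{m(I)}$ may sit entirely inside $J$, and $L_B^{(0)}$ fails to vanish at $\b(B')$. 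Case (c) is the main obstacle.

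I would overcome obstacle (c) by induction on $\#I$. For the base $\#I=0$, case (c) is vacuous and $L_B:=L_B^{(0)}$ already has the Lagrange property. Inductively, assume a Lagrange polynomial $L_{B'}\in\calP_\sk$ has been produced for every $B'\in\B_\sk$ with $\#(B'\cap X)<\#I$, and set
$$L_B\;:=\;L_B^{(0)}\;-\;\sum_{\substack{B'\in\B_\sk\\ \#(B'\cap X)<\#I}}\frac{L_B^{(0)}(\b(B'))}{L_{B'}(\b(B'))}\,L_{B'}.$$
Then $L_B\in\calP_\sk$. For any $B''\in\B_\sk\bks\{B\}$, either $\#(B''\cap X)\ge\#I$, in which case cases (a)--(b) give $L_B^{(0)}(\b(B''))=0$ and every summand vanishes at $\b(B'')$ by the inductive Lagrange property (since $B''\neq B'$ when $\#(B''\cap X)>\#(B'\cap X)$); or $\#(B''\cap X)<\#I$, in which case the single term $B'=B''$ in the sum cancels $L_B^{(0)}(\b(B''))$ while all other summands vanish at $\b(B'')$ by induction. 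Finally $L_B(\b(B))=L_B^{(0)}(\b(B))\neq 0$ since each $L_{B'}(\b(B))$ vanishes by induction. This produces the Lagrange basis $(L_B)_{B\in\B_\sk}\subset\calP_\sk$; its immediate linear independence yields the third proof of Theorem \ref{pbine}.
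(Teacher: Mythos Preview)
Your argument is correct, but it takes a genuinely different route from the paper's.  Both you and the paper start from the same candidate $L_B^{(0)}=Q_B:=q_{X'_I\bks B}$ and verify $Q_B\in\calP_\sk$ in the same way (this is the paper's Lemma \ref{pq}).  The divergence is in how the residual ``bad'' bases are killed.  The paper analyzes the set $A:=\{B'\in\B_\sk: Q_B(\b(B'))\neq 0\}$ much more finely (Lemma \ref{lem4cond}) and shows that every $B'\in A\bks\{B\}$ necessarily satisfies $\sk(B'\cap X)=\sk(I)$ and hence contains the single vector $y_{m(I)+1}$; this lets the paper multiply $Q_B$ by one carefully tailored \emph{linear} polynomial $\ell_B$ (a modification of $q_{y_{m(I)+1}}$ that stays in $\calP_\sk$) to obtain the Lagrange polynomial in the explicit product form $L_B=Q_B\,\ell_B$.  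You instead stratify $\B_\sk$ by $\#(B\cap X)$ and perform a straightforward triangular (Gram--Schmidt--type) correction, subtracting off previously constructed Lagrange polynomials from strictly lower strata.  Your approach is shorter and sidesteps the delicate construction of $\ell_B$, but the resulting $L_B$ is only a recursive linear combination, not a product; the paper's explicit $Q_B$ is not incidental, since these very polynomials reappear in \S4 as the basis of $\calP_\sk$ in the incremental case (Theorem \ref{thminc}).
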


Obviously, the above Lagrange polynomials are linearly independent,
and therefore Proposition \ref{conlang} implies that
$$\dim\calP_\sk\ge \#V_\sk=\#\B_\sk,$$
providing thereby another proof to Theorem \ref{pbine}.

%
%
%
%
%
\medskip
Before we embark on the proof of the Proposition, we mention the following
simple fact:

\begin{lemma}\label{pq} Assume that $\sk$ is solid, 
let $Z\subset X$ and let $f$ be a polynomial of degree
no more than $\sk(X\bks Z)$. Then $q_Z\,f\in \calP_{\sk}$.
\end{lemma}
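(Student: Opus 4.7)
The plan is to expand $q_Z$ as an affine-combination polynomial in the $p_x$'s and reduce the claim to the solid property of $\sk$ applied term-by-term. Concretely, using $q_x = p_x - \lam_x$, I would first write
$$q_Z \;=\; \prod_{x\in Z}(p_x-\lam_x) \;=\; \sum_{Z'\subseteq Z} c_{Z'}\, p_{Z'}, \qquad c_{Z'}:=\prod_{x\in Z\bks Z'}(-\lam_x),$$
so that
$$q_Z\,f \;=\; \sum_{Z'\subseteq Z} c_{Z'}\, p_{Z'}\,f.$$
It thus suffices to prove that, for every $Z'\subseteq Z$, the product $p_{Z'}\,f$ lies in $\calP_\sk$.

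To do this, set $W:=X\bks Z$ and $W':=X\bks Z'$. Since $Z'\subseteq Z$ we have $W\subseteq W'$, hence $\Span W \subseteq \Span W'$. The solid hypothesis on $\sk$ then yields $\sk(W)\le \sk(W')$, and the assumption $\deg f\le \sk(W)$ upgrades to $\deg f\le \sk(W')$. Therefore $f\in\Pi_{\sk(W')}$ and, since $p_{Z'}=p_{X\bks W'}$,
$$p_{Z'}\,f \;\in\; p_{X\bks W'}\,\Pi_{\sk(W')} \;\subset\; \calP_\sk,$$
by the very definition (\ref{P_k}) of $\calP_\sk$. Summing over $Z'\subseteq Z$ gives $q_Z\,f\in\calP_\sk$.

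There is no real obstacle here: the argument is essentially a bookkeeping step that unpacks $q_Z$ into its $p_{Z'}$-components and feeds each component into the appropriate summand of $\calP_\sk$. The only conceptual ingredient is the solid property, which is precisely what lets the single bound $\deg f\le \sk(X\bks Z)$ simultaneously certify membership in every one of the summands indexed by supersets $W'\supseteq X\bks Z$.
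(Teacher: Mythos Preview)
Your proof is correct and follows essentially the same approach as the paper: expand $q_Z$ into a linear combination of $p_{Z'}$ over $Z'\subseteq Z$, then use solidity to see that $\deg f\le \sk(X\bks Z)\le \sk(X\bks Z')$, so each $p_{Z'}f$ lies in the summand $p_{X\bks(X\bks Z')}\Pi_{\sk(X\bks Z')}$ of $\calP_\sk$. The paper's version is simply a terser rendition of exactly this argument.
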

\begin{proof}Expanding $q_Z$, we have that $q_Z\,f$ is a linear
combination of $p_{Z'}\,f$ for some $Z'\subset Z$. Since $\sk$ is solid,
$\deg(f)\le \sk(X\bks Z)\le \sk(X\bks Z')$, so we have that $p_{Z'}\,f\in
\calP_\sk$. Therefore, $q_Z\,f\in \calP_{\sk}$
\end{proof}

Our next task is to construct the aforementioned Lagrange basis.
So, we fix $B\in \B_{\strangek}$, and denote by $v\in V_{\strangek}$ the
corresponding vertex $v=\b(B)$. Given $x\in X\cup Y$, we have that
$q_x(v)=0$ if and only if $x\in B$. 
With the above $B$ in hand, we denote
$$X_B:=X\cup Y_{m(X\cap B)},$$
and
$$L_B:=q_{X_B\bks B}\,\ell_B=:Q_B\,\ell_B$$
with $\ell_B$ a linear polynomial that we define in the sequel.
Assuming that we make sure that $\ell_B(v)\not=0$, it is clear that
$L_B(v)\not=0$. Our goal is to show, then, that $L_B(v')=0$, for
every $v'\in V_{\sk}\bks v$, and that $L_B\in\calP_{\sk}$.

Our first observation is that $Q_B$ above already vanishes at
``most" of the points in $V_{\sk}$. Indeed, each $B'\in A:=\{B'\in
\B_\sk: Q_B(\b(B'))\not =0\}$  has the following form:
\begin{equation}\label{bbprime}
B'=I'\cup J\cup\{y_{m(I)+1},\ldots,y_{m(I')}\},
\end{equation}
where  $J=B\cap Y$ and $I'=B'\cap X\subset B\cap X=I$ with
$\strangek(I')=\strangek(I)$. This is implied by the following
lemma.
\begin{lemma}\label{lem4cond}
$B'\in A$ if and only if the following four conditions hold:
\begin{itemize}
\item[(i)] $I'\subset I$,
\item[(ii)] $B'\cap Y_{m(I)}=J$,
\item[(iii)] $\strangek(I')=\strangek(I)$, and
\item[(iv)] $Y\cap B'\bks B=Y_{m(I')}\bks Y_{m(I)}$.
\end{itemize}
\end{lemma}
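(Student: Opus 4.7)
My plan is to reduce the characterization of $A$ to the single condition $B' \cap X_B \subset B$, and then unpack this condition via cardinality counting using the solid property.

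The starting point is the observation that, under the genericity assumption on the $\lam_z$, the map $\b$ is injective and, more usefully, $q_x(\b(B'))=0$ iff $x\in B'$. Therefore
$$Q_B(\b(B'))=\prod_{x\in X_B\bks B} q_x(\b(B'))\ne 0\iff (X_B\bks B)\cap B'=\emptyset\iff B'\cap X_B\subset B.$$
Splitting by the $X$- and $Y$-parts and using that $B\cap Y\subset Y_{m(I)}$ (so $B\cap X_B=I\cup J$), this is equivalent to the two containments $I'\subset I$ (giving (i)) and $J'\cap Y_{m(I)}\subset J$, where $I':=B'\cap X$ and $J':=B'\cap Y$.

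For the ``only if'' direction, I will combine the containment $J'\cap Y_{m(I)}\subset J$ with the membership $B'\in\B_\sk$ (i.e., $J'\subset Y_{m(I')}$) and the basis equality $\#J'=n-\#I'$. Writing $J'=(J'\cap Y_{m(I)})\sqcup(J'\bks Y_{m(I)})$ and noting that the second piece sits in $Y_{m(I')}\bks Y_{m(I)}$, I get
$$n-\#I'=\#J'\le \#J+\max(0,m(I')-m(I))=(n-\#I)+\max(0,m(I')-m(I)).$$
Rearranging yields $m(I')-m(I)\ge \#I-\#I'$, i.e.\ $\sk(I')\ge \sk(I)$. Together with the solid inequality $\sk(I')\le\sk(I)$ (from $\Span I'\subset\Span I$), this forces $\sk(I')=\sk(I)$, which is (iii). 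Moreover equality must hold in each of the two cardinality bounds above, which gives $J'\cap Y_{m(I)}=J$ (condition (ii)) and $J'\bks Y_{m(I)}=Y_{m(I')}\bks Y_{m(I)}$ (condition (iv)).

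For the ``if'' direction, I assume (i)--(iv) and set $B':=I'\cup J\cup (Y_{m(I')}\bks Y_{m(I)})$. Condition (iii) forces $m(I')\ge m(I)$, so $J\subset Y_{m(I')}$, giving $B'\cap Y\subset Y_{m(I')}$; this is the $\B_\sk$-condition provided $B'$ is a basis. A straightforward count gives $\#B'=n$, and the general position hypothesis on $Y$ in $X\cup Y$ (no $y\in Y$ lies in the span of fewer than $n$ vectors of $(X\cup Y)\bks y$) immediately guarantees, by adding the $y$-elements to the independent set $I'$ one at a time, that $B'$ is linearly independent, hence a basis. Finally, $B'\cap X_B=I'\cup J\subset B$, so $Q_B(\b(B'))\ne 0$.

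The main obstacle is keeping the bookkeeping transparent: the necessity direction pivots on squeezing a single inequality ($\sk(I')\ge \sk(I)$) out of the three simultaneous cardinality constraints, and on observing that equality there forces equality in each of conditions (ii) and (iv) simultaneously. Invoking the solid property at exactly the right place is what converts the cardinality inequality into the qualitative statement (iii). The sufficiency direction is then essentially verification, with the general position of $Y$ the only non-bookkeeping ingredient.
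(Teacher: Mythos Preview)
Your proof is correct and follows essentially the same approach as the paper: both reduce membership in $A$ to the containment $B'\cap X_B\subset B$ (equivalently, $I'\subset I$ and $B'\cap Y_{m(I)}\subset J$), and then use a cardinality count together with the solid property of $\sk$ to upgrade the resulting inclusions to the equalities (ii)--(iv). Your treatment of the sufficiency direction---in particular, the verification via general position that the set $I'\cup J\cup(Y_{m(I')}\bks Y_{m(I)})$ is actually a basis and lies in $\B_\sk$---is somewhat more explicit than the paper's, which handles this implicitly.
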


\begin{proof}[Proof of Lemma \ref{lem4cond}:]
First, it is easy to see that $B'\in A$ iff, with $v':=\b(B')$,
$q_{X\bks B}(v')\neq
0$ and $q_{Y_{m(I)}\bks B}(v')\neq 0$ iff $I'\subset I$
(Condition (i)) and $B'\cap Y_{m(I)}\subset B\cap Y=J$ (half of
Condition (ii)). We claim that this implies Condition (ii). In fact,
if $J':=B'\cap Y_{m(I)}$ is a proper subset of $J=B\bks I$,
then the maximal possible cardinality of $B'$ is
$$\#I'+\#J'+(m(I')-m(I))= \#I'+\#J'+\#I-\#I'+k(I')-k(I)\le\#I+\#J'<n,$$
since $I'\subset I$ and $\sk$ is solid. Therefore $B'\in A$ iff (i)
and (ii) hold.

Next we want to show that (iii) is implied by (i) and (ii). By (i)
and the fact that $\sk$ is solid, we have $\sk(I')\le \sk(I)$. Therefore,
$$m(I')-m(I)=\#(I\bks I')+\sk(I')-\sk(I)\le \#(I\bks I'),$$
with equality if and only if $\sk(I)=\sk(I')$. However, (ii) implies
that the set $Y_{m(I')}\bks Y_{m(I)}$ contains exactly $\#(I\bks
I')$ elements of $B'$ hence is of cardinality $\ge \#(I\bks I')$,
and (iii) thus follows.

Last, we show that condition (iv) is implied by the other
three: the argument in the previous paragraph shows that
$m(I')-m(I)=\#(I\bks I')$, and that $B'$ contains exactly $\#(I\bks I')$
vectors from
$Y_{m(I')}\bks Y_{m(I)}$, so  (iv) follows.
\end{proof}

So we have a bijection between $A$ and the subsets of $I'$ of $I$
that satisfy $\strangek(I')=\strangek(I)$. In that bijection, $I'$
is extended to $B'\in A$ via (\ref{bbprime}).

We now need to define $\ell_B$ in a way that it vanishes on $\b(A)\bks v$.
In view of the above bijection, we choose a {\it proper} subset
$I'$ of $I$ for which $\sk(I)=\sk(I')$, extend it to $B'$ as above, and
verify that our soon-to-be-defined $\ell_B$ vanishes at $v':=b(B')$.
Since $I'$ is a proper subset of $I$, it follows that $m(I')>m(I)$,
hence that the vector
$y':=y_{m(I)+1}$ lies in $B'$. Thus, $q_{y'}(v')=0$, and hence
the polynomial
$$Q_B\,q_{y'}$$
vanishes on $V_\sk\bks v$. However, this polynomial may not be in
$\calP_\sk$. We, therefore, write $q_{y'}$ as the sum
$\ell_B+(q_{y'}-\ell_B)$, with $\ell_B$ a linear polynomial that is
chosen so that: (i) $Q_B\,\ell_B\in \calP_\sk$, and (ii)
$q_{y'}-\ell_B$ vanishes on $\b(A)$. Condition (ii) will imply that
$\ell_B$ vanishes on $\b(A)\bks v$, hence $Q_B\ell_B$ is the
sought-for Lagrange polynomial.

To this end, we write $y'=\sum_{x\in B}a(x)x$, for some coefficients
$(a(x))_{x\in B}$,
and claim first that, if $x\in J$, or, alternatively,
if $x\in I'':=\{x\in I:\ \strangek(I\bks x)<\strangek(I)\}$, then,
in each case, $q_x$ vanishes on $\b(A)$. Once we prove
it, we define
$$\ell_B:=q_{y'}-\sum_{x\in I''\cup J}a(x)q_x,$$
and conclude that $\ell_B$ vanishes on $\b(A)\bks v$. Moreover, since
$p_{y'}=\sum_{x\in B}a(x)p_x$, we have that
$$\ell_B=\sum_{x\in I\bks I''}a(x)p_x-\lam_{y'}+\sum_{x\in I''\cup
J}a(x)\lam_x=\sum_{x\in I\bks I''}a(x)q_x-\lam_{y'}+\sum_{x\in
B}a(x)\lam_x.$$ Therefore, $Q_B\,\ell_B$ is a linear combination of
$Q_B$, and $Q_B\,q_x$, $x\in I\bks I''$.

Now, $Q_B$ itself lies in $\calP_\sk$: it is
the product of $q_Z$, $Z:=X\bks I$ by a polynomial $P$ of degree
$m(I)-(n-\#I)=\sk(I)$, hence lies in $\calP_\sk$ by Lemma \ref{pq}.
As to $Q_B\,q_x$, we can write it as the product $q_{Z\cup x}P$,
with $Z,P$ as above. Now, $X\bks (Z\cup x)=I\bks x$, and since we
assume that $\sk(I\bks x)=\sk (I)$, we still have that $\deg
P=\sk(I\bks x)$, hence by Lemma \ref{pq}, we have $q_{Z\cup x}\,P\in
\calP_\sk$. In conclusion, $Q_B\,\ell_B\in \calP_\sk$.

So, it remains  to show that
$q_x$ vanishes on $\b(A)$, whenever $x\in J\cup I''$. If $x\in J$,
then trivially, $q_x$ does so, since $J=B\bks I$ is a common subset
for all the bases in $A$ (cf.\ (ii) in Lemma \ref{lem4cond}).
Otherwise, $x\in I$, and $\strangek(I\bks x)<\strangek(I)$. Now, if
$q_x(\b(B'))\not=0$ for some $B'\in A$, then $x\not \in I':=B'\cap
X$. However, by property (i) of $A$, $I'\subset I$, and we conclude
that $I'\subset I\bks x$, and, since $\sk$ is solid, that
$\sk(I')\le \sk(I\bks x)<\sk(I)$, in contradiction to (ii) of Lemma
\ref{lem4cond}. So, $q_x$ vanishes on $\b(A)$ for every $x\in J\cup
I''$ and our proof is complete.
\endproof


\section{Incremental assignments}

Assuming that the assignment $\sk$ is solid, we have proved that
$$\dim \calP_\sk\ge \dim\calD_\sk=\#\B_\sk.$$
Moreover, the three different proofs for the inequality above that
were presented in \S3 show that:

\begin{corollary} Let $\sk$ be a  solid assignment and assume
that $\dim\calP_\sk=\#\B_\sk$. Then:
\begin{itemize}
\item $\calP_\sk$ and $\calD_\sk$ are dual to each other:
$$\calP_\sk\oplus \calJ_\sk=\Pi.$$
\item The homogeneous basis that was constructed in \S3.2 is a basis for
$\calP_\sk$.
\item The Lagrange basis that was constructed in \S3.3 is a basis
for $\calP_\sk$.\footnote{Since $\calD_\sk$ was proved to be equal
to $\Pi(V_\sk)$, then, once we know that $\calP_\sk$ is dual to
$\calD_\sk$, the existence of a Lagrange basis for $\calP_\sk$
follows. However, \S3.3 provides an explicit construction of that
basis.}
\end{itemize}
\end{corollary}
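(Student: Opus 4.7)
The plan is to dispatch the two basis assertions first, since they follow immediately from the dimension hypothesis. In \S3.2 we produced $\#\B_\sk$ polynomials $p_{X'(B)}$, $B\in\B_\sk$, that are linearly independent (being a subset of the \cite{DR} homogeneous basis for $\calP(X')$) and that were verified to lie in $\calP_\sk$; under $\dim\calP_\sk=\#\B_\sk$, they must therefore span. The identical argument applies to the Lagrange polynomials $L_B$ from Proposition \ref{conlang}: they are linearly independent by their interpolatory behavior on $V_\sk$, lie in $\calP_\sk$, and number exactly $\#\B_\sk$.

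The bulk of the work is the first bullet. Proposition \ref{sum} already yields $\calP_\sk+\calJ_\sk=\Pi$, so the remaining task is to show the intersection is trivial. The approach is degree-by-degree: both $\calJ_\sk$ and $\calP_\sk$ are graded subspaces of $\Pi$. Indeed, $\calJ_\sk$ is homogeneous because it is generated by the homogeneous polynomials $p_Z$, while $\calP_\sk=\sum_Z p_{X\bks Z}\,\Pi_{\sk(Z)}$ is a sum of graded subspaces since each $\Pi_{\sk(Z)}=\bigoplus_{j\le \sk(Z)}\Pi_j^0$ is graded. Writing $V_k:=V\cap\Pi_k^0$ for any graded $V$, the sum identity thus descends to $(\calP_\sk)_k+(\calJ_\sk)_k=\Pi_k^0$ in every degree $k$.

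The final step is a dimension count in each degree. The apolar pairing $\langle p,q\rangle=p(D)q(0)$ is non-degenerate on $\Pi_k^0$, and by the definition of $\calD_\sk$, the piece $(\calD_\sk)_k$ is precisely the annihilator in $\Pi_k^0$ of $(\calJ_\sk)_k$, so $\dim(\calJ_\sk)_k+\dim(\calD_\sk)_k=\dim\Pi_k^0$. Combined with the degree-$k$ sum identity this forces $\dim(\calP_\sk)_k\ge\dim(\calD_\sk)_k$ in every degree. Summing over $k$ and invoking Theorem \ref{dimd} together with the hypothesis $\dim\calP_\sk=\#\B_\sk$ yields global equality, which in turn forces equality in every degree; equality of dimensions combined with the degree-$k$ sum identity then gives $(\calP_\sk)_k\cap(\calJ_\sk)_k=0$, and reassembling across degrees produces $\calP_\sk\oplus\calJ_\sk=\Pi$. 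The only mild obstacle here is the graded bookkeeping; everything else is a direct consequence of results already in hand.
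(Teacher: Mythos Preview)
Your proof is correct and matches the paper's intent: the paper states this corollary without proof, presenting it as an immediate consequence of the three arguments in \S3, and your write-up simply makes explicit the standard graded bookkeeping (that $\calP_\sk$ and $\calJ_\sk$ are homogeneous, that $(\calD_\sk)_k$ is the annihilator of $(\calJ_\sk)_k$ under the pairing on $\Pi_k^0$, and that the dimension hypothesis forces the sum in Proposition~\ref{sum} to be direct). The handling of bullets two and three is likewise exactly what the paper has in mind.
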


We will show in this section that the equality
\begin{equation}\label{pskeq}
\dim\calP_\sk=\#\B_\sk
\end{equation}
is valid once we assume $\sk$ to be (solid and) incremental.

\begin{example}[Continuation of Example \ref{eg3}]\label{eg4}
 We revisit the analysis made in Example \ref{eg3} of
$\calP_\sk$.  In the setup of that example, we already showed that
$\dim\calP_\sk\ge \#\B_\sk$, which must be the case since $\sk$ in
that example is solid.  Further, the example identifies exactly the
cases when equality holds: $\dim \mathcal{P}_{\strangek}= \dim
\mathcal{D}_{\sk}$ if and only if $\ell\in\{k,k+1\}$ and
$j\in\{k-1,k\}$. It is easy to check that these are exactly the
cases when the solid assignment $\sk$ is incremental. Thus, for the
simple setup of Example \ref{eg3}, the incrementality of $\sk$ is
{\it equivalent} to the equality (\ref{pskeq}).
\end{example}

In order to prove that (\ref{pskeq}) holds, we revisit the Lagrange
basis that was constructed in \S3.3, and that, so far, is only known
to be a basis for a subspace of $\calP_\sk$. We will show below
that, once $\sk$ is assumed to be incremental, a slightly simpler
version of this basis can be proved to span the entire $\calP_\sk$
space. To this end, we retain the notations from \S3.3, and in
particular the set
$$X_B:=X\cup Y_{m(B\cap X)}.$$
The Lagrange basis in \S3.3 was indexed by $\B_\sk$, with the basis
polynomial that corresponds to $B\in \B_\sk$ taking the form of the
product of
\begin{equation}
Q_B:=q_{X_B\bks B}
\end{equation}
and a carefully chosen linear polynomial $\ell_B$. It is
shown in the proof of Proposition \ref{conlang} that
the polynomials $Q_B$, $B\in \B_\sk$,
lie, each, in $\calP_{\strangek}$. The following theorem claims much
more:


\begin{theorem} \label{thminc} Assume that $\strangek$ is incremental.
Then the polynomials $(Q_B)_{B\in \B_{\strangek}}$ form a basis for
$\calP_{\strangek}$.
\end{theorem}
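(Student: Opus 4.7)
The plan is a two–stage argument: first show that the polynomials $(Q_B)_{B\in \B_\sk}$ are linearly independent in $\calP_\sk$, then show that they span $\calP_\sk$. Membership $Q_B\in \calP_\sk$ is already provided by Lemma \ref{pq} (apply it with $Z:=X\bks I$, $I:=B\cap X$, and $f:=q_{Y_{m(I)}\bks J}$ of degree $\sk(I)=\sk(X\bks Z)$), so only these two properties remain.

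For linear independence I would adapt the vertex-evaluation strategy from Proposition \ref{conlang}. Totally order $\B_\sk$ by decreasing $|B\cap X|$, breaking ties arbitrarily. Lemma \ref{lem4cond} shows that $Q_B(\b(B'))\ne 0$ forces $I':=B'\cap X\subseteq I:=B\cap X$, and when $I'=I$ the remaining conditions (ii) and (iv) of that lemma pin down $B'=B$. In the chosen order, this makes the evaluation matrix $\bigl[Q_B(\b(B'))\bigr]_{B,B'}$ upper triangular with nonzero diagonal, so the $Q_B$'s are linearly independent in $\calP_\sk$. Note that this step uses only the solidity of $\sk$.

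For spanning I would show that every generator $p_{X\bks Z}\cdot f$ of $\calP_\sk$ (with $Z\subset X$ and $\deg f\le\sk(Z)$) lies in $\Span\{Q_B\}$. Expanding $p_{X\bks Z}=\prod_{x\in X\bks Z}(q_x+\lam_x)$ and invoking solidity reduces the task to showing $q_{X\bks Z'}\cdot g\in \Span\{Q_B\}$ for $Z\subseteq Z'\subseteq X$ and $\deg g\le \sk(Z')$. Next, I would replace $Z'$ by an independent $I\in\I(X)$ having the same span, which by solidity preserves $\sk$, so the task becomes $q_{X\bks I}\cdot g\in \Span\{Q_B\}$ for $\deg g\le\sk(I)$. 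Finally, the $Q_B$'s with $B\cap X=I$ are precisely $q_{X\bks I}\cdot q_{Y_{m(I)}\bks J}$ as $J$ ranges over the $(n-|I|)$-subsets of $Y_{m(I)}$ completing $I$ to a basis; these contribute all $q_{X\bks I}\cdot h$ with $h$ in the span of the polynomials $q_{Y_{m(I)}\bks J}$, and the remaining correction needed to cover $q_{X\bks I}\,\Pi_{\sk(I)}$ should be supplied by the lower-order terms of $Q_B$'s with $B\cap X\supsetneq I$.

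The main obstacle will be this final step: matching the top-degree contributions from $Q_B$'s with $B\cap X=I$ against the lower-order corrections from $Q_B$'s with strictly larger $B\cap X$, with no shortfall. This is precisely where the incremental hypothesis enters: incrementality controls how $m(I)=\sk(I)+n-|I|$ evolves when enlarging $I$, so the combinatorics of the $(B\cap X\supsetneq I)$-contributions line up exactly with the dimensions of the missing spaces. Example \ref{eg4} confirms that solidity alone can leave $\dim\calP_\sk>\#\B_\sk$, so incrementality is precisely the extra ingredient that forces equality and makes the $Q_B$'s into a basis.
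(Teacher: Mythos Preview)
Your linear-independence argument via Lemma \ref{lem4cond} is fine but unnecessary: the paper already has $\dim\calP_\sk\ge\#\B_\sk$ (Theorem \ref{pbine}), so once you show that the $Q_B$'s span $\calP_\sk$, independence is automatic. The real content is the spanning, and here your outline has a genuine gap.

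Two places need work. First, your reduction ``replace $Z'$ by an independent $I$ with the same span'' does not by itself turn the problem $q_{X\bks Z'}\Pi_{\sk(Z')}\subset\calQ$ into $q_{X\bks I}\Pi_{\sk(I)}\subset\calQ$: one has $q_{X\bks I}=q_{X\bks Z'}\,q_{Z'\bks I}$, not the other way around. The paper closes this by letting $I$ range over $\B(Z')$, noting that the polynomials $q_{Z'\bks I}$ span $\calP(Z')\ni 1$, and deducing $q_{X\bks Z'}\Pi_{\sk(Z')}\subset q_{X\bks Z'}\Pi_{\sk(Z')}\calP(Z')\subset\calQ$.

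Second, and more seriously, your final step is only a hope. The $Q_B$'s with $B\cap X=I$ contribute $q_{X\bks I}\cdot q_{Y_{m(I)}\bks J}$, and these $q_{Y_{m(I)}\bks J}$ do \emph{not} span $\Pi_{\sk(I)}$ (there are $\binom{m(I)}{n-\#I}$ of them, too few once $\#I>0$). Your proposed remedy, to harvest lower-order terms from $Q_B$'s with $B\cap X\supsetneq I$, goes in the wrong direction and you give no mechanism. The paper's trick is the opposite: enlarge the pool to $Y_I:=Y_{m(I)}\cup I$, which has $\sk(I)+n$ vectors in general position, so that $\{q_W:\ W\subset Y_I,\ \#W=\sk(I)\}$ \emph{is} a basis of $\Pi_{\sk(I)}$. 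If $W\subset Y$ you get a $Q_B$ outright; if $W$ meets $X$, absorb $W\cap X$ into the $X$-factor and pass to $I':=I\bks(W\cap X)\subsetneq I$, where incrementality gives $\sk(I')\ge\sk(I)-\#(W\cap X)$, hence $\deg q_{W\bks X}\le\sk(I')$ and induction on $\#I$ applies. Incrementality is used exactly here, and the induction descends in $\#I$, not ascends.
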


\begin{proof}[Proof of Theorem \ref{thminc}] Since we already know that
$\dim\calP_{\strangek}\ge \#\B_\sk$, and since we have exactly
$\#\B_{\strangek}$ functions in the polynomial set $(Q_B)_{B\in
\B_{\strangek}}$, we just need to prove that those polynomials span
$\calP_{\strangek}$. Let us denote by $\calQ$ their linear span. We
need to prove that, for every $Z\subset X$,
$$p_{X\bks Z}\Pi_{\sk(Z)}\subset
\calQ.$$  We first prove that for $I\in\I(X)$,
\begin{equation}\label{firstq}
q_{X\bks I}\Pi_{\strangek(I)}\subset \calQ.
\end{equation}
We prove this result by induction on $\#I$.
%
%
%
Denote $Y_I:=Y_{m(I)}\cup I$.  By our assumption on $Y$, the vectors
in the set $Y_I$ are in general position; also $\#(Y_I)=\sk(I)
+n-\#I+\#I= \sk(I)+n$. Therefore, the polynomials
$$q_W, \quad W\subset Y_I, \ \#W=\sk(I)$$
form a basis for $\Pi_{\sk(I)}$ (since they are linearly
independent: they form a Lagrange basis over the vertices of the
arrangement associated with $Y_I$). Therefore, once we show that
$$q_{X\bks I}q_W\in\calQ,$$
for every $W$ as above, we will conclude that  (\ref{firstq}) holds.
Now, if $W\subset Y$ (which is the only case if $I=\emptyset$)
then, with $J:=Y_{m(I)}\bks W$, we have
that $B:=I\cup J\in \B_\sk$, and that
$$q_{X\bks I}q_W=Q_B\in\calQ.$$
This completes the proof of (\ref{firstq}) for the initial case of the
induction ($I=\emptyset$). For all other $I$, we need to consider also
the case when $W\not\subset Y$. In that case, we
write
$$q_{X\bks I}q_W=q_{X\bks I} q_{W\cap X}q_{W\bks X},$$
and set $I':=I\bks (W\cap X)$. Then, $\#I-\#I'=\#(W\cap X)$, and we
conclude from the incremental property of $\sk$ that
$$\strangek(I')\ge \strangek(I)-\#(W\cap X).$$
On the other hand, $\#(W\bks X)=\#W-\#(W\cap
X)=\strangek(I)-\#(W\cap X)$. Consequently,
$$\deg q_{W\bks X}\le \strangek(I').$$
Thus,
$$q_{X\bks I}q_W=
q_{X\bks I'} q_{W\bks X}\in q_{X\bks I'}\Pi_{\sk(I')}
\subset\calQ,$$
with the last inclusion by the induction hypothesis (which we are allowed
to invoke since $I'$ is a proper subset of $I$).
This completes the proof of (\ref{firstq}).

Next, let $Z\subset X$, not necessarily independent. We want to show
that
\begin{equation}\label{secondq}
q_{X\bks Z}\Pi_{\sk(Z)}\subset\calQ.
\end{equation}
In order to prove the above, we consider $Z$ as a matroid, and let
$I\in \B(Z)\subset \I(X)$. Since $\Span I=\Span Z$ and $\sk$ is
solid, $\sk(I)=\sk(Z)$. Hence, by (\ref{firstq}),
$$q_{X\bks Z}q_{Z\bks I}\Pi_{\sk(Z)}=q_{X\bks I}\Pi_{\sk(I)}\subset \calQ.$$
This implies that
$$q_{X\bks Z}\Pi_{\sk(Z)}\Span\{q_{Z\bks I}: I\in \B(Z)\}\subset\calQ.$$
However, by \cite{DR}, the polynomials
$$q_{Z\bks I}, \quad I\in \B(Z)$$
form a basis for the central space $\calP(Z)$, hence we conclude
that
$$q_{X\bks Z}\Pi_{\sk(Z)}\calP(Z)\subset\calQ.$$
Since $\calP(Z)$ always contains the constants, we obtain
(\ref{secondq}).

Finally, we prove that, for every $Z\subset X$,
\begin{equation}\label{thirdq}
p_{X\bks Z}\Pi_{\sk(Z)}\subset \calQ.
\end{equation}
That will imply that $\calP_{\sk}\subset \calQ$, and will complete
the proof of the theorem. We prove (\ref{thirdq}) by induction on
$\#(X\bks Z)$, with the initial case $X=Z$ being trivial since for
this case there is no difference between $q_{X\bks Z}$ and $p_{X\bks
Z}$, hence (\ref{thirdq}) is implied here by (\ref{secondq}). Now,
assume that $Z\not =X$, and write
\begin{equation}\label{last}
p_{X\bks Z}=q_{X\bks Z}+\sum_{Z'}a(Z')p_{X\bks Z'},
\end{equation}
with $Z'$ ranging over all the proper supersets of $Z$. The
induction hypothesis implies that $p_{X\bks Z'}\Pi_{\sk(Z')}\subset
\calQ$, for each $Z'$ as above. Since $\strangek(Z')\ge
\strangek(Z)$, we have that
$$p_{X\bks Z'}\Pi_{\sk(Z)}\subset \calQ,\ Z'\supset Z,\ Z'\not=Z.$$
Since (\ref{secondq}) shows that $q_{X\bks Z}\Pi_{\sk(Z)}\subset
\calQ$, we conclude from (\ref{last}) that $p_{X\bks
Z}\Pi_{\sk(Z)}\subset \calQ$.
\end{proof}

\end{document}